\newtheorem{prop}{Proposition}
\newtheorem{defi}{Definition}
\DeclarePairedDelimiter{\abs}{\lvert}{\rvert}
\begin{document}\sloppy

\title{Rhythmic Control of Automated Traffic - Part I: Concept and Properties at Isolated Intersections}
\author{Xiangdong Chen\textsuperscript{a}\hspace{1em} Meng Li\textsuperscript{a}\hspace{1em} Xi Lin\textsuperscript{a}\hspace{1em} Yafeng Yin\textsuperscript{b,c}\hspace{1em} Fang He\textsuperscript{d}\footnote{Corresponding author. E-mail address: \textcolor{blue}{fanghe@tsinghua.edu.cn}.}}
\affil{\small\emph{\textsuperscript{a}Department of Civil Engineering, Tsinghua University, Beijing 100084, P.R. China}\normalsize}
\affil{\small\emph{\textsuperscript{b}Department of Civil and Environmental Engineering, University of Michigan, Ann Arbor, MI 48109, USA}\normalsize}
\affil{\small\emph{\textsuperscript{c}Department of Industrial and Operations Engineering, University of Michigan, Ann Arbor, MI 48109, USA}\normalsize}
\affil{\small\emph{\textsuperscript{d}Department of Industrial Engineering, Tsinghua University, Beijing 100084, P.R. China}\normalsize}

\date{\today}
\maketitle

\begin{abstract}
\noindent
Leveraging the accuracy and consistency of vehicle motion control enabled by the connected and automated vehicle technology, we propose the rhythmic control (RC) scheme that allows vehicles to pass through an intersection in a conflict-free manner with a preset rhythm. The rhythm enables vehicles to proceed at a constant speed without any stop. The RC is capable of breaking the limitation that right of way can only be allocated to non-conflicting movements at a time. It significantly improves the performance of intersection control for automated traffic. Moreover, the RC with a predetermined rhythm does not require intensive computational efforts to dynamically control vehicles, which may possibly lead to frequent accelerations or decelerations. Assuming stationary vehicle arrivals, we conduct theoretical investigation to show that RC can considerably increase intersection capacity and reduce vehicle delay. Finally, the performance of RC is tested in the simulations with both stationary and non-stationary vehicle arrivals at both symmetric and asymmetric intersections.  \par
\hfill\break%
\noindent\textit{Keywords}: rhythmic control; conflicting points; average vehicle delay; admissible demand set; connected automated vehicles
\end{abstract}

\section{Introduction} \label{sec_intro}
Connected vehicle (CV) technologies are capable of building an interconnected network of moving vehicles and infrastructures, where vehicle-to-vehicle and vehicle-to-infrastructure communications can be realized in a collaborative and real-time manner. Fully automated vehicles (AV) are capable of gathering information, autonomously performing all driving functions, and monitoring roadway conditions for an entire trip (NHTSA, 2013). From a traffic operations perspective, the capabilities of AV technologies, integrated with CV systems, can further enable more responsive traffic controls, which imply a tremendous opportunity to more efficiently allocate right of way (ROW) at intersections. \par
In the literature, several studies have been conducted to improve intersection control under a fully connected and automated vehicle (CAV) environment, where an intersection controller can acquire the state information of each CAV including location, velocity and acceleration, and vehicles can access traffic information (e.g., signal timings or speed advisory) in real time. In a broad sense, related studies could be categorized into two groups, i.e., “signal-free” and “signalized” schemes. “Signal-free” schemes explicitly optimize the sequence of each CAV passing through an intersection (Levin and Rey, 2017; Lee and Park, 2012; Wu et al., 2012; Xu et al., 2018; Muller
et al., 2016), whereas “signal schemes” organize non-conflicting movements into groups/phases, then form platoons in each movement direction, and finally optimize the phase sequences (Li et al., 2014; Yu et al., 2018; Feng et al., 2018). To fully leverage the capabilities of CAV technologies, both schemes formulate the problem of passing sequence optimization as mathematical programs. However, exactly solving the optimization problems is undoubtedly burdensome, as the size of passing sequences grows exponentially with the number of incoming vehicles. For instance, as reported byLevin and Rey (2017),the proposed mixed integer linear program (MIP) that optimizes vehicle ordering at conflict points can only be solved in real time for up to 30 vehicles. Yu et al. (2018) proposed a MIP model to optimize vehicle trajectories and traffic signals, and it is showed that both constraint and binary variable exhibit quadratic growth with respect to the vehicle number in the worst cases. To satisfy a very time-constrained optimizations schedule required by a real-time control, some studies adopt heuristics such as ant colony system  (Wu et al., 2012), rolling horizon framework (Levin and Rey, 2017), and “first-come-first-served" (FCFS) strategy (Fajardo et al., 2011; Li et al., 2013). Nevertheless, it is always difficult to theoretically guarantee the solution quality of the above heuristics. As shown by Levin et al. (2016), compared to traffic signals, the FCFS policy leads to longer delays in some instances.Yu et al. (2019) recently conducted a theoretical queuing analysis of reservation-based policy according to a single conflict point; the results suggest that the FCFS is suboptimal in terms of throughput. Therefore, despite a growing interest in leveraging CAV technologies to improve intersection control, still lacking is a method that not only is scalable to real-world applications but also can considerably increase the intersection capacity and reduce vehicle delay.\par

To address the aforementioned research question, this paper suggests a new perspective in organizing traffic at intersections. Our concept is based on the fundamental observation at intersections that a complicated conflicting relationship is composed of a group of conflicting points among movements, and the crux for an intersection control is the temporal allocation of ROW among movements. The goal of an intersection control is to improve the intersection traffic efficiency, measured by throughput or vehicle delay, while avoiding collision at all conflicting points. Conceptually, the traffic throughput and delay of a movement is closely related to the percentage of time when it owns ROW. This can be intuitively explained through imagining that a link without any conflict points will always yield the maximum throughput and the minimum delay as the movement on it is always guaranteed $100\%$ ROW. At an intersection, we cannot provide $100\%$  ROW to a movement when it has conflicting flows. Nevertheless, if one can increase the number of movements that simultaneously own ROW at the intersection without any risk of collisions, then the intersection throughput would be increased. \par

The traditional way of intersection control is to organize non-conflicting movements into groups (phases) and sequentially assign ROW to one group at a time to avoid collisions. No matter how the allocation of green time among groups is optimized, the number of movements owning ROW simultaneously is limited by the number of non-conflicting movements in the group. Leveraging the accuracy and consistency of vehicle motion control enabled by the CAV technology, we propose a computationally cheap control that can break this limitation and allow more conflicting movements to simultaneously own ROW while ensuring safety. We call it as rhythmic control (RC) scheme. The RC scheme controls vehicles to pass through the intersection in a regularly recurring sequence, such that vehicles of any two intersecting lanes pass through the corresponding conflicting point in an alternating and conflict-free manner at a constant speed without any stop. We henceforth refer to this regularly recurring sequence as a rhythm. \par

Specifically, RC first untangles the intersection conflicts by designing a spatial layout of a few two-way conflicting points, and then enables CAVs to proceed within the intersection at a constant speed without any stop, by letting them follow a preset and coordinated rhythm. The rhythm assigns regularly recurring vehicle entry times for each lane such that the vehicles pass through each conflicting point in an alternating way. Such RC is capable of breaking the limitation that ROW can only be allocated to non-conflicting movements at a time, thus significantly improving the performance of intersection control. Moreover, the proposed RC scheme relies on a preset rhythm, rather than dynamically controlling the movements of vehicles/platoons to avoid conflicts in their space-time trajectories, as this latter method requires complicated computational efforts and possibly frequent accelerations or decelerations. Assuming stationary vehicle arrivals, we conduct theoretical investigation to show that RC can considerably increase the intersection capacity and reduce vehicle delay. For the former, we investigate an admissible demand set, defined as the set of demand rates that yield bounded queue lengths on all lanes.\par

This study mainly focuses on the traffic control at an isolated intersection. For dense urban areas, coordination between intersections is required to promote the mobility of the whole network. Thus, we further study the rhythmic control of CAVs on grid networks, which is arranged as a companion of this study (Part II); the detail can be found in Lin et al. (2020).

For the remaining parts of the paper, Section \ref{sec_motiv} provides a motivating example for the RC scheme. Section \ref{sec_RC} rigorously defines the RC scheme, and describes its collision-avoidance approach and implementation issues. Section \ref{sec_propoerties} derives the average vehicle delay and admissible demand set of RC. Section \ref{sec_numerical} conducts numerical experiments to compare RC with some typical existing control schemes in various settings. Finally, Section \ref{sec_conclusion} concludes the paper. \par

\section{Motivating example} \label{sec_motiv}

\begin{figure}[!ht]
	\centering
	\includegraphics[width=1\textwidth]{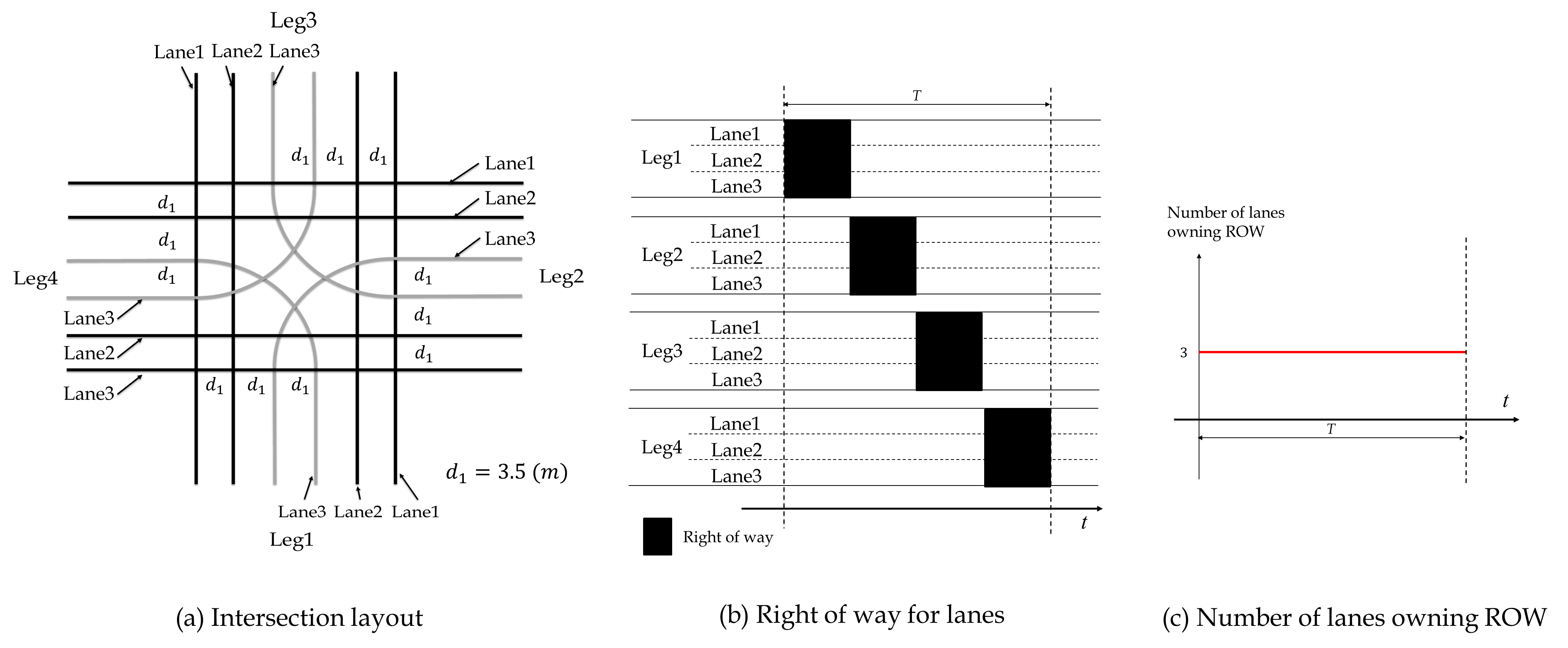}
	\caption[]{Right of way allocation under traditional TSC} 
	\label{fig_ROWTSC}
\end{figure}
\par

\begin{figure}[!ht]
	\centering
	\includegraphics[width=1\textwidth]{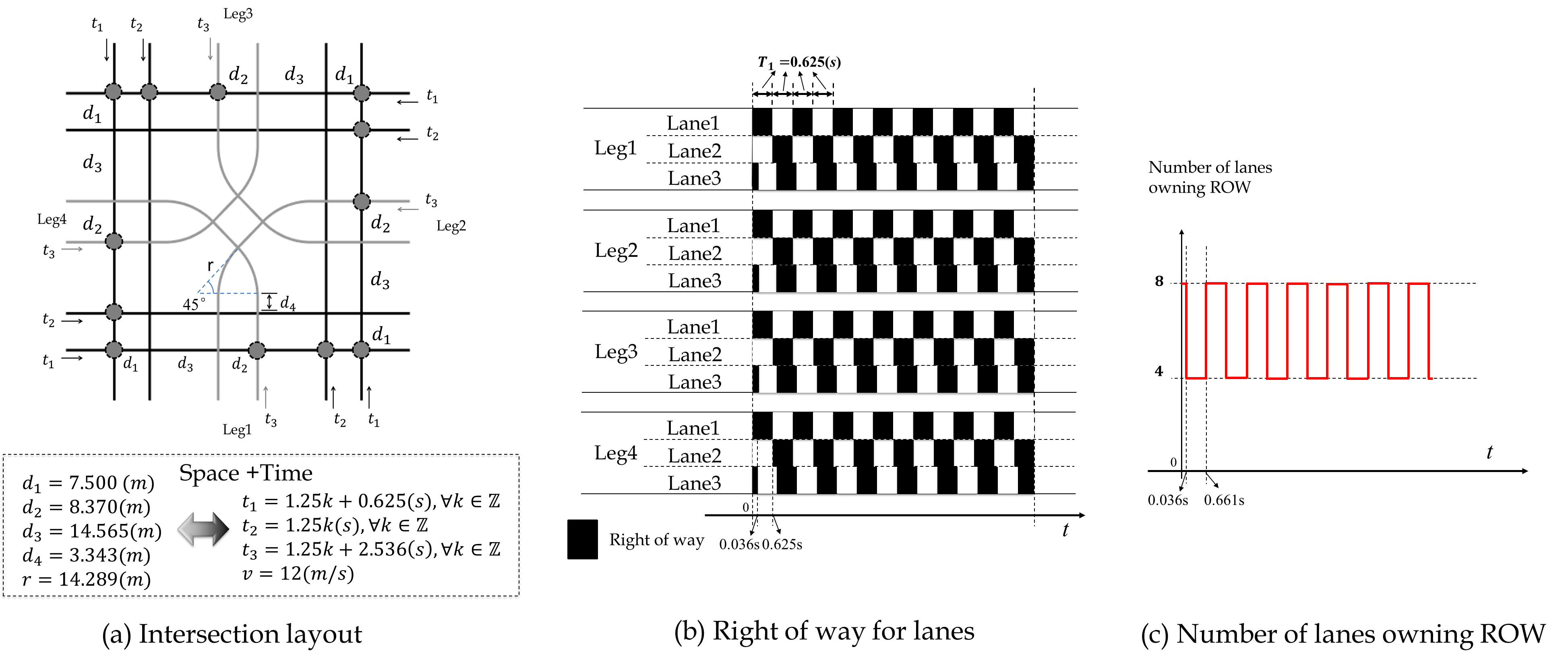}
	\caption[]{Right of way allocation under RC} 
	\label{fig_ROWRC}
\end{figure}
\par

\begin{figure}[!ht]
	\centering
	\subfloat[][]{\includegraphics[width=0.8\textwidth]{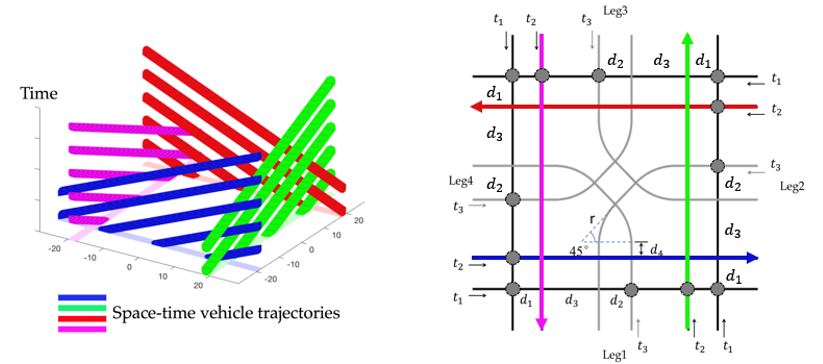}}\\
	\subfloat[][]{\includegraphics[width=0.8\textwidth]{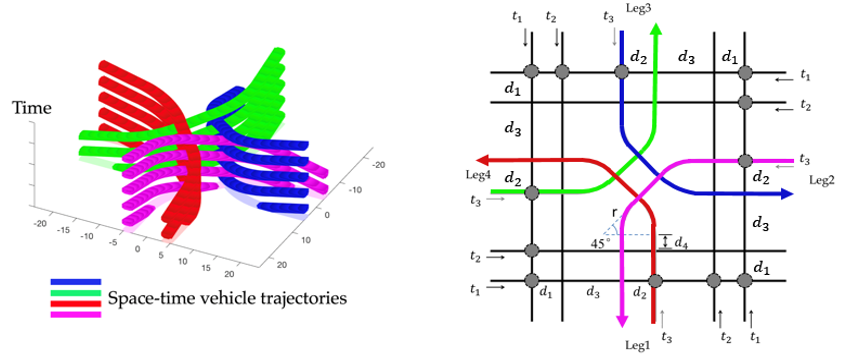}}\\
	\subfloat[][]{\includegraphics[width=0.8\textwidth]{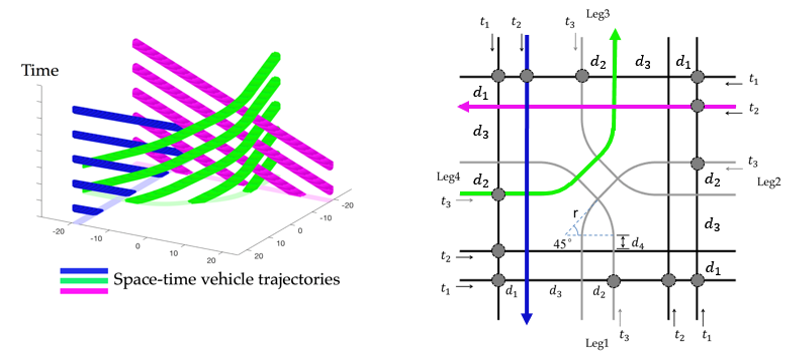}}
	\caption[]{Space-time trajectories at the isolated intersection in Figure 2(a).} 
	\label{spacetime_fig}
\end{figure}
\par

\noindent To better explain the concept of RC and demonstrate its advantage over traditional traffic signal control (TSC), consider a toy example of an isolated symmetric intersection with 3 lanes, where lanes 1, 2 and 3 of each leg are through and left-turn lanes, respectively, as shown in Fig.\ref{fig_ROWTSC}(a). Fig.\ref{fig_ROWTSC}(b) shows the temporal allocation of ROW among different lanes under a traditional TSC within a cycle, where the black bars represent the intervals of the lanes owning ROW. Fig.\ref{fig_ROWTSC}(c) depicts the number of lanes owning ROW over time. We can observe that no matter how signal timing is optimized, this number always equals the number of lanes in a signal phase, which cannot exceed the maximum number of lanes in non-conflicting movements. To break this limitation, we propose the rhythmic control (RC) scheme. Specifically, it first designs the spatial placement of conflicting points through re-designing the intersection layout, as shown in Fig.\ref{fig_ROWRC}(a). Then, it sets the rhythm, i.e., assigning the times of vehicles on each lane entering the first conflicting points, showed by the dotted circles in Fig.\ref{fig_ROWRC}(a). Under the logic of RC, vehicles on lanes 1, 2 and 3 of each leg periodically enter the corresponding first points at times $t_{1}=1.25k+0.625(s)$, $t_{2}=1.25k(s)$ and $t_{3}=1.25k+2.536(s), \forall k\in \mathbb{Z}$, respectively (the detailed computation approach of the parameters can be found in the definition of RC scheme of Section \ref{subsec_RCdescrib}). Then, they will cross the intersection at a constant speed $v=12 (m/s)$ without any stop.  It can be proved that this design totally resolves all the inter-vehicle conflicts. Fig.\ref{spacetime_fig} provides the designed vehicular space-time trajectories under the RC scheme within the intersection to demonstrate the conflict-free properties of RC. Fig.\ref{fig_ROWRC}(b) shows the temporal distribution of ROW on each lane under the RC scheme, whereas Fig.\ref{fig_ROWRC}(c) depicts the number of lanes owning ROW over time. It can be observed that the number of lanes owning ROW under the RC scheme is always higher than (sometimes even two times higher than) that under TSC, which intuitively confirms RC’s capability of utilizing the intersection more sufficiently. Furthermore, as shown in Fig.\ref{fig_ROWRC}(b), the length of each blank area between two consecutive black bars is only $0.625 s$, which contributes to reducing vehicle delays at the intersection. Finally, although Figs.\ref{fig_ROWRC}-\ref{spacetime_fig} demonstrate the performance improvement by an RC scheme for a particular intersection layout, it still remains as a major challenge to propose an RC scheme that could resolve all inter-vehicle conflicts at a generic intersection. \par

\section{Rhythmic control scheme} \label{sec_RC}

\subsection{Scheme Description} \label{subsec_RCdescrib}
\begin{figure}[!ht]
	\centering
	\includegraphics[width=0.7\textwidth]{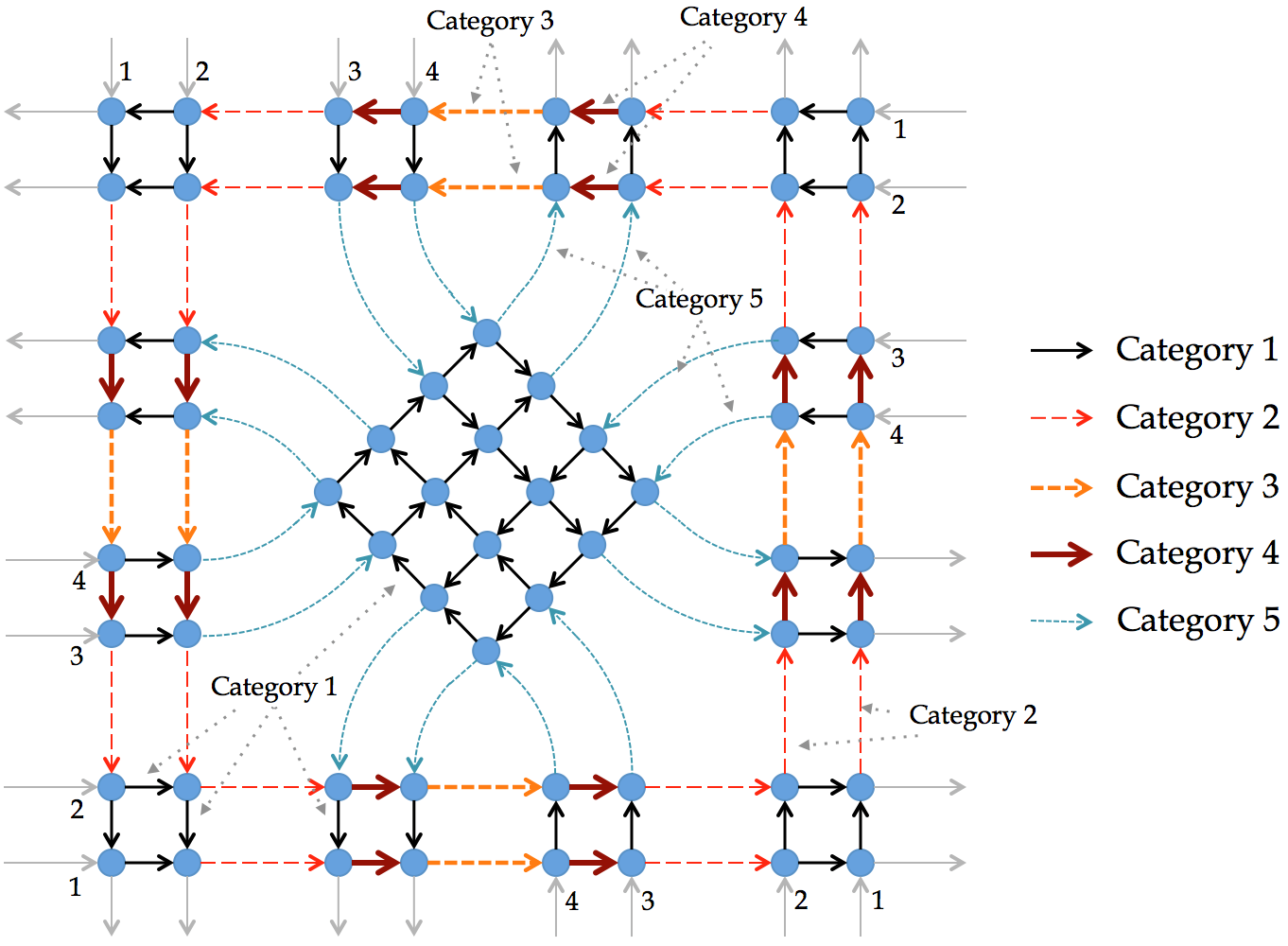}
	\caption[]{Categorization of lane segments at an intersection} 
	\label{lanesegment_fig}
\end{figure}
\par
\noindent We utilize a symmetric four-leg intersection, shown in Fig.\ref{lanesegment_fig}, to demonstrate the execution process of the RC. In Fig. \ref{lanesegment_fig}, an arrow represents a lane segment (defined as the section of a lane between two consecutive conflict points), and a circle represents a conflicting point. The numbering system of the four legs is in a counter-clockwise manner, and the lanes on each leg are numbered from the border line to the center line as $1, 2, …, n_{s}, n_{s+1}, …, n_{s}+n_{l}$, where $n_{s}$ and $n_{l}$ are the number of through and left-turn lanes ($n_{s}=2$ and $n_{l}=2$ for the intersection illustrated in Fig.\ref{lanesegment_fig}), respectively. Right-turn lanes are not considered because they do not directly conflict with other lanes. Moreover, to untangle the intersection conflicts, we do not allow the mixed-use situations of left-turn/through lanes. All lane segments are categorized into five types according to the geometric features and corresponding preset travel time. Specifically, Category 1 segments are those that connect two through lanes or those that lie in the central zone with conflicting left-turn lanes; Category 2 segments are those located on through lanes and connect one through lane and one left-turn lane; Category 3 segments are those on through lanes and connect two left-turn lanes from different legs; Category 4 segments are those on through lanes and connecting two left-turn lanes from the same legs; Category 5 segments are those on left-turn lanes that connect one through lane and one left-turn lane. Readers can refer to Fig.\ref{lanesegment_fig} for the categorization. The sets of segments of Categories 1–4 are denoted by $\mathcal{S}_{i}, i=1,2,3,4$, respectively. The  set of segments of Category 5 on lane number $l\in\{n_{s}+1……,n_{s}+n_{l}\}$ is denoted by $\mathcal{S}_5^l$ as the lane number $l$ is related to the segment length. The vehicle travel time on any segment in $\mathcal{S}_i, i=1,2,3,4$ is denoted $T_i$, and the travel time on any segment in $\mathcal{S}_5^l,l\in\{n_{s}+1……,n_{s}+n_{l}\}$ is $T_5^l$. Note that in the RC scheme, both $T_i$ and $T_5^l$ are preset and fixed travel times, which are determined by the preset vehicle traveling speed in the conflict zone and intersection geometric layout; they do not change in real time. Notations used in this paper are listed in Appendix A.\par

With the foregoing, the RC scheme is shown as follows. Here, it is defined that a vehicle enters an intersection if it touches the first conflict point on its lane, as shown in Fig.\ref{lanesegment_fig}. \\ \par

\noindent\underline{\textbf{{Rhythmic Control}}}

\noindent $\bullet$ A benchmark time (e.g., 0:00:00 on a day) is set as $t=0$; \par
\noindent $\bullet$ Vehicles on through lane $l\in\{{1,2,…,n_s}\}$ enter the intersection at time:
\begin{align*}
   t=\begin{cases}(2k+1)T_1, \text{ if $l$ is odd}\\(2k)T_1,\text{ if $l$ is even} \end{cases},k\in \mathbb{Z};
\end{align*}
\noindent $\bullet$ Vehicles on left-turn lane $\hat{l}\in\{{n_s + 1,...,n_s + n_l}\}$ enter the intersection at time:
\begin{align*}
    t = \begin{cases}(2k + n_s - 1)T_1+(2n_l)T_4+T_2+T_3,\text{ if ($\hat{l}-n_s$) is odd}\\ (2k + n_s - 1)T_1+(2n_l-1)T_4+T_2+T_3,\text{ if ($\hat{l}-n_s$) is even}\end{cases},k\in\mathbb{Z},
\end{align*}
where $\mathbb{Z}$ represents the set of all integers.\\ 

As shown above, for any approaching lane, RC only sets the associated entry time points and requires no real-time trajectory control in the conflict zone. It is only necessary for vehicles to adjust their trajectories to satisfy the corresponding entry time points before reaching the intersection; consequently, the computational load for the control center is substantially relieved. Although the control logic of RC is considerably simple, it exhibits some promising properties, which are gradually introduced in the following sections. 

\subsection{Collision avoidance} \label{subsec_CollisionAvoidance}
\noindent
In this section, it is shown that RC can guarantee collision avoidance with appropriate design, as verified by Proposition \ref{Prop_collisionfree}. In the following discussions, parameter $T$ is defined as the minimum time gap of two vehicles from conflicting lanes consecutively passing through the conflicting point. If the time gap is no less than $T$, then collision can be avoided at the conflicting point.\par

\begin{prop}\label{Prop_collisionfree}
RC is collision-free if the following conditions hold:
\begin{enumerate}
    \item [(1)]$T_1=T$;
    \item [(2)]$T_4=(2k_0+1) T_1,k_0\in \mathbb{N}$;
    \item [(3)]$2T_2+T_3=(2k_0^{'}+1) T_1,k_0^{'}\in \mathbb{N}$;
    \item [(4)]$2T_5^l+T_3=(2k_0^{''}+1) T_1,k_0^{''}\in \mathbb{N},l\in \{n_s+1,…,n_s+n_l\}$;
    \item [(5)]$T_5^i-T_5^j=2k_0^{'''}T_1,k_0^{'''}\in \mathbb{N},i,j\in \{n_s+1,…,n_s+n_l\},i<j$;
\end{enumerate}
Here, $\mathbb{N}$ represents the set of all non-negative integers.
\end{prop}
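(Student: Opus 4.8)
The plan is to reduce conflict-freeness to an arithmetic statement about passage times modulo $2T_1$, and then discharge that statement conflict point by conflict point. First I would observe that on every lane the entry times form an arithmetic progression of common period $2T_1$ (the integer $k$ is multiplied by $2T_1$ in both branches of the control rule). Since each vehicle accrues only fixed segment travel times after entering, the times at which a given stream crosses any fixed conflict point also form a progression of period $2T_1$. At a conflict point two such streams meet; writing their phases as $\phi_A,\phi_B\in[0,2T_1)$, the interleaved passage times leave consecutive gaps $d$ and $2T_1-d$ with $d=(\phi_B-\phi_A)\bmod 2T_1$. By condition (1), $T=T_1$, so requiring every gap to be at least $T$ forces $d\ge T_1$ and $2T_1-d\ge T_1$ simultaneously, i.e.\ $d=T_1$. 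Hence the proposition is equivalent to: for every conflict point, the two crossing streams have passage times differing by exactly $T_1$ modulo $2T_1$.

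Next I would set up passage-time bookkeeping. The passage time of a stream at a conflict point equals its entry time plus the sum of the travel times of the segments it traverses on the way there, and I only need this value modulo $2T_1$. Conditions (2)--(5) are manufactured precisely so that this reduction collapses to a single parity bit: a Category-1 segment contributes $T_1$; condition (2) gives $T_4\equiv T_1$; conditions (3) and (4) fix the paired combinations $2T_2+T_3\equiv T_1$ and $2T_5^l+T_3\equiv T_1$; and condition (5) makes all the $T_5^l$ mutually congruent, so the left-turn lane index cannot spoil the arithmetic. Consequently every passage time reduces, modulo $2T_1$, to either $0$ or $T_1$, and the target from the first step becomes the statement that the two streams at each conflict point carry opposite parities.

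I would then run the case analysis over the conflict-point types identified by the layout of Fig.\ref{lanesegment_fig}. For through--through crossings (Category-1 segments), the entry rule already assigns phase $T_1$ to odd lanes and phase $0$ to even lanes, and each Category-1 segment crossed flips the parity; this is a checkerboard argument showing that two perpendicular through streams always meet with opposite parity. For the through--left and left--left conflicts in the central zone, I would count the Category-2, -3, -4 and -5 segments separating each lane's entry point from the conflict point, substitute the entry-time offsets $(2n_l)T_4+T_2+T_3$ versus $(2n_l-1)T_4+T_2+T_3$ from the control rule, and reduce using conditions (2)--(5); the factor-of-two structure of (3) and (4) matches the fact that a left-turning path meets such segments in mirrored pairs, so the combinations $2T_2+T_3$ and $2T_5^l+T_3$ appear intact and each contributes $T_1$.

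The main obstacle will be the geometric accounting rather than the algebra: correctly determining, for each conflict-point type, exactly which segment categories and how many of each lie between a lane's entry point and that point---particularly for the left-turn movements that weave through the central zone---and verifying that the enumeration of conflict-point types is exhaustive. Once that bookkeeping is pinned down from the layout, the parity conditions (2)--(5) discharge each case mechanically, so the entire difficulty is concentrated in translating the intersection geometry into the correct multiset of segment travel times.
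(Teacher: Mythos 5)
Your plan is essentially the paper's proof: the paper likewise reduces collision avoidance to showing that, at every conflict point, the two crossing streams' arrival times differ by an odd multiple of $T_1$ (your ``opposite parity mod $2T_1$'' statement), and then enumerates the conflict-point types (through--through, two through--left variants, and left--left), summing the traversed segment travel times and discharging each case with conditions (2)--(5) exactly as you outline. Note only that the geometric bookkeeping you defer is the entire body of the paper's argument---it writes out explicit arrival-time expressions for each of the four types and reduces them to forms like $T_{l_1}-T_{l_2}=[2(k_1-k_2-n_s-k_0+l_1)-1]T_1$---so a complete write-up would still require carrying out that enumeration.
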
 

\begin{proof}
See Appendix B.
\end{proof}

For an intersection, it is straightforward to verify that there exist infinite combinations of $T_i$ and $T_5^l$ that satisfy Conditions (2)–(5). Once the basic time interval $T_1$ is given, for any length of the segments, based on Conditions (2)–(5), we can always find appropriate $k_0$, $k_0^{'}$, $k_0^{''}$, $k_0^{'''}$ to determine the values of $T_2$, $T_3$, $T_4$ and $T_5^l$ such that the vehicle speeds are within a reasonable range (in the view of vehicle dynamics). Therefore, the segment travel time is not quite sensitive to the intersection configurations. An example of an isolated intersection that satisfies these conditions is given in Fig.\ref{fig_ROWRC}(a).

To this end, we have described the scheme of RC and shown the collision avoidance only at symmetric intersections. For asymmetric intersections with different numbers of lanes on two intersecting roads, the proposed scheme can be readily generalized, as an asymmetric intersection can be treated as a symmetric intersection with some virtual lanes (see Fig.\ref{asymmetric_fig}). Specifically, the asymmetric intersection with these actual lanes in Fig.\ref{lanesegment_fig} is guaranteed to be collision-free by designing the entry time spots on both actual and virtual lanes as per the simple procedure in Section \ref{subsec_RCdescrib}. To highlight the effectiveness of RC, the test on its performance at asymmetric intersections is presented in Section \ref{sec_numerical}.

\begin{figure}[!ht]
	\centering
	\includegraphics[width=0.7\textwidth]{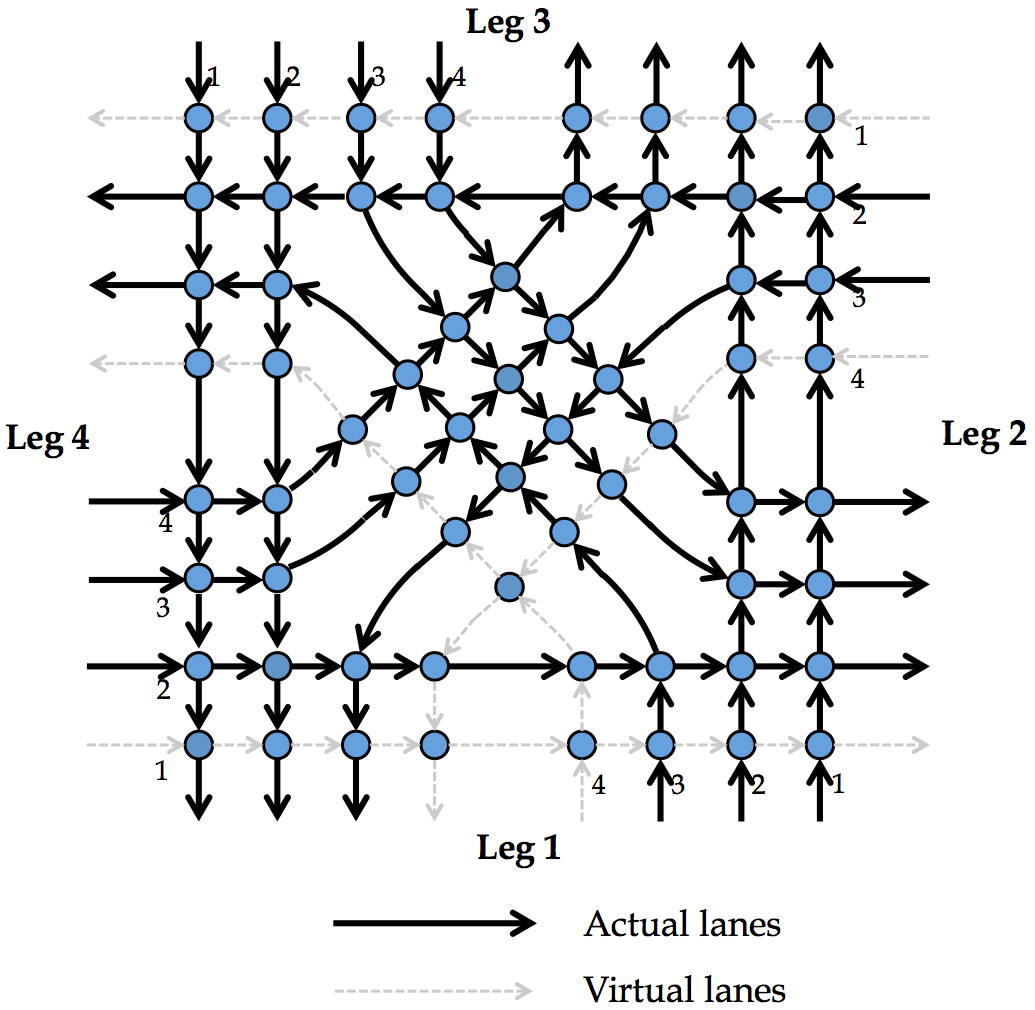}
	\caption[]{Illustration of an asymmetric intersection} 
	\label{asymmetric_fig}
\end{figure}
\par

\subsection{Implementing RC in fully connected automated environment} \label{subsec_Implementing}
\noindent
As described, RC requires an ordered and highly precise vehicle movement; this suggests that a fully CAV environment is needed. In this section, some related issues on the implementation of RC are discussed. To regulate the trajectories of CAVs as per the RC policy, a road segment immediately before an intersection is divided into two parts, as shown in Fig.\ref{zonedivision_fig}. The vehicles travel from the adjustment zone to the conflicting zone; the length of adjustment zone can be set reasonably large to allow sufficient room for trajectory adjustment (e.g., 100 m as that in Yu et al. (2019)). The typical setting of RC in a fully CAV environment is described as follows. 

\begin{figure}[!ht]
	\centering
	\includegraphics[width=0.8\textwidth]{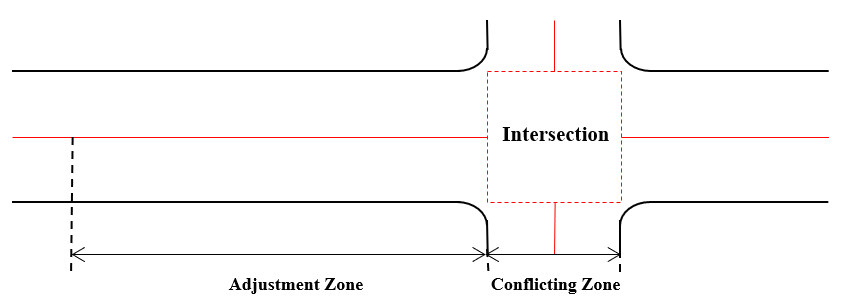}
	\caption[]{Zone division} 
	\label{zonedivision_fig}
\end{figure}
\par

\begin{enumerate}
\item [i]Once a CAV enters the adjustment zone, its real-time speed and acceleration are uploaded to a centralized or roadside controller, and the controller will take over (or guide) the vehicle movement in the zone. This controller is responsible for adjusting the trajectory of the vehicle by a set of simple and computationally inexpensive rules such that the vehicle can reach the conflict zone with a preset speed at a specified timing.  
\item [ii]Within the conflicting zone, the vehicle is fully controlled by the centralized controller and follows a pre-given and fixed speed until it leaves the intersection. 
\end{enumerate}

During the entire process, it is only necessary for the controller to adjust vehicle speeds by a set of simple rules in the adjustment zone and to control the vehicles to follow the pre-given and fixed speed in the conflicting zone; these yield a considerably limited computational load for the controller. \par
The minimum allowable distance between any two vehicles for ensuring safety is set as $\delta$. It is assumed that all vehicles have identical length ($L$) and width ($w$). Note that the identical length assumption is not necessarily restrictive. In fact, we can consider $L$ and $w$ to be the maximum length and width of all vehicles, respectively, without affecting the main results of this study. Furthermore, if a vehicle with a length larger than $L$ enters an intersection, then the centralized controller can close some entry time spots with potential collisions for this vehicle to ensure safety. There is a maximum speed ($v_m$) and a maximum absolute value of acceleration and deceleration rates ($a_m$). In the conflicting zone, vehicles travel at a speed $v_m$. All vehicles are located on their target lanes before entering the adjustment zone; thus, there is no lane-changing behavior in the adjustment and conflicting zones. All lanes intersect in a perpendicular way. \par

In this setting, to physically avoid collisions, parameter $T$ needs to satisfy $T\geq\frac{L+w+\sqrt{2} \delta}{v_m}$. The details of deriving this inequality are shown in Appendix C. Recall that $T$ represents the minimum time gap of two vehicles from conflicting lanes consecutively passing through the conflicting point. Combining $T\geq\frac{(L+w+\sqrt{2} \delta)}{v_m}$ and $T_1=T$ (Condition (1) in Proposition \ref{Prop_collisionfree}) yields $v_mT_1\geq{L+w+\sqrt{2} \delta}$, suggesting that the interval between two parallel through lanes or left-turn lanes must exceed a minimum distance, and the minimum distance could be generally larger than the ordinary lane interval (e.g., $3.5 m$). This observation implies the possible necessity to redesign the geometry of an intersection for implementing RC. Fig.\ref{intersectionlayout_fig} provides an example. Note that even though the lane interval is enlarged, the lane width can be kept unchanged. Accordingly, it is possible to utilize the space between adjacent lanes for other functionalities.Note that the proposed RC is more suitable to be implemented in places where major roads intersect. Such major intersections are often the bottlenecks of local road networks, and thus improving their associated capacities could relieve urban congestion to a great extent. For dense urban areas, we propose a grid-network rhythmic control framework, where the coordination between intersections is required to promote the mobility of the whole network (Lin et al., 2020).\par

\begin{figure}[!ht]
	\centering
	\includegraphics[width=0.7\textwidth]{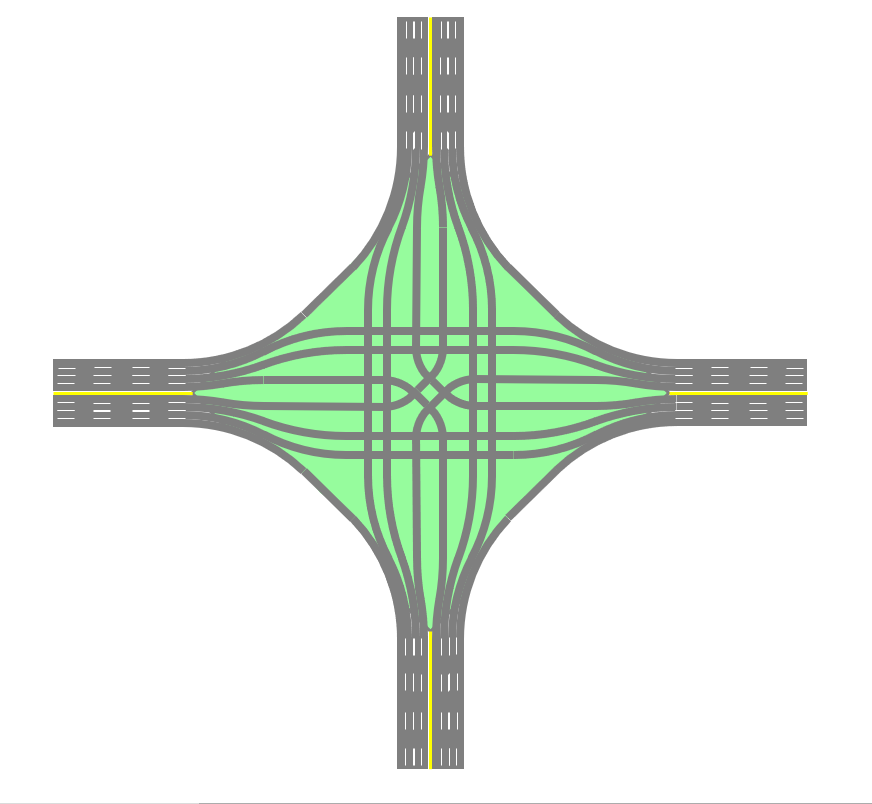}
	\caption[]{Enlarged lane intervals with redesigned intersection layout} 
	\label{intersectionlayout_fig}
\end{figure}
\par
Finally, the implementation of RC requires that vehicles enter an intersection at designated time spots and speeds. To achieve this goal, the trajectories of vehicles in the adjustment zones must be adjusted before entering the intersection. Appendix D introduces a set of parsimonious rules for trajectory adjustment that avoids complicated online computation or optimization. Note that the trajectory adjustment in the adjustment zone is essentially an optimal control problem (Feng et al., 2018), which is beyond the scope of this study. Considering that the proposed RC is devoted to reducing the computational efforts of a controller to the extent possible, only a set of computationally inexpensive rules are provided to identify reasonable vehicle trajectories.

\section{Properties of RC under stochastic vehicle arrivals} \label{sec_propoerties}

\noindent
This section presents the theoretical performances of RC under more realistic settings, i.e., with stochastic vehicle arrivals. The analyses are focused on average vehicle delay (Section \ref{subsec_delay}) and admissible demand set (Section \ref{subsec_AdmissibleDemand}).  Throughout this section, we set the vehicle arrival to be stationary with a rate of $\theta > 0$, but we do not impose any specific stochastic process (e.g., Poisson's process) that the arrivals must follow; the only assumption made is that the vehicle arrivals of any two entrance intervals are independent of each other. With the independence setting, we can adopt Markov chain approach to derive the target results.

\subsection{Average vehicle delay} \label{subsec_delay}

\noindent Based on the RC scheme, the benchmark time for each lane can be reset without loss of generality such that the intersection-entry time points of vehicles are $0,2T_1,4T_1,…,2kT_1,…,$ where $T_{1}={\frac{(L+w+\sqrt{2} \delta)}{v_m}}$; the cruising time in the adjustment zone is ignored. To model the stochastic vehicle queuing length in the adjustment zone of an intersection under the RC scheme in a parsimonious manner, a discrete Markov chain model is employed to specify the number of queuing vehicles immediately before each intersection-entry time point. The number of queuing vehicles immediately before the $k^{th}$ entry time point is denoted as $w_k$. In this Markov chain, $w_k$ can be interpreted as the state variable at step $k$; the state space is $\mathbb{N}$. Then, the initial state and state transition probability can be defined as follows:
\begin{align*}
& \mathbb{P}(w_0 = 0) = 1 \\
& \mathbb{P}(w_{k+1} = b | w_k = 0) = P_{b}^{\theta} & k \ge 0, b \ge 0 \\
& \mathbb{P}(w_{k+1} = a + b | w_k = a) = P_{b+1}^{\theta} & k \ge 0, a \ge 1, b \ge -1
\end{align*}

\noindent where $P_{i}^{\theta}, i \in \mathbb{N}$ is the probability that $i$ vehicles arrive between two consecutive entry time points with the intensity, $\theta$; it must satisfy $\sum_{i=0}^{+\infty} P_{i}^{\theta} = 1$ and $\sum_{i=0}^{+\infty} i P_{i}^{\theta} = 2 \theta T_1$, where the latter indicates that the expected number of vehicles arriving during one interval is $2 \theta T_1$. Then, the state transition matrix can be defined as $\boldsymbol{\pi}^{\theta} = [\pi_{ij}^{\theta}], i \ge 0, j \ge 0$, where

\begin{align*}
& \pi_{ij}^{\theta} = \left\{ 
\begin{matrix}
P_i^{\theta} & j = 0 \\
P_{i-j+1}^{\theta} & j \ge 1, i \ge j - 1 \\
0 & \mathrm{otherwise}
\end{matrix}
\right.
\end{align*}
Accordingly, the steady-state equations of this Markov chain can be written as follows:

\begin{align}
& p_i^{\theta} = \sum_{j=0}^{+\infty} p_j^{\theta} \pi_{ij}^{\theta} & \forall i \in \mathbb{N} \label{stationary_eq_1}\\
& \sum_{j=0}^{+\infty} p_j^{\theta} = 1 \label{stationary_eq_2}
\end{align}
where $p_i^{\theta}$ is the steady-state probability that $i$ vehicles are in the queue immediately before an entry time point under arrival rate $\theta$. Eq.(\ref{stationary_eq_1}) can be further simplified to take the form of the following recurrent equations:

\begin{align}
& p_1^{\theta} = \frac{1 - P_0^{\theta}}{P_0^{\theta}}p_0^{\theta} \label{recurrent_eq_1}\\
& p_2^{\theta} = \frac{1 - P_0^{\theta} - P_1^{\theta}}{(P_0^{\theta})^2} p_0^{\theta} \label{recurrent_eq_2} \\
& p_{i+1}^{\theta} = \frac{1}{P_0^{\theta}} \left[ (1 - P_1^{\theta}) p_i^{\theta} - \sum_{j=1}^{i-1} P_{i+1-j}^{\theta} p_j^{\theta} - P_i^{\theta}p_0^{\theta} \right] \label{recurrent_eq_3}
\end{align}
By solving the above linear system, we can then obtain the steady-state probability distribution $p_j^{\theta}, j \in \mathbb{N}$. With the state probability, we then derive the formulation for the expected queuing delay. By Little's law (Little, 1961), in a queuing system, the average delay can be computed as the average queue length divided by arrival rate, therefore we have:
\
\begin{align}
\overline{D}(\theta) &= \frac{1}{\theta} \sum_{n=1}^{+\infty} \frac{1}{2}(n + n - 1) p_n^{\theta} = \frac{1}{\theta} \left( \frac{1}{2} \sum_{n=1}^{+\infty} p_n^{\theta} +  \sum_{n=1}^{+\infty} (n-1) p_n^{\theta} \right)
\end{align}
Here, the average queue length is computed by $\sum_{n=1}^{+\infty} \frac{1}{2}(n + n - 1) p_n^{\theta}$; this is because $p_n^{\theta}$ is the probability that $n$ vehicle is waiting \textit{right before} an entry time point, and it will soon become $\max(n-1,0)$ after the entry time; therefore we should average them. From Eqs.(\ref{recurrent_eq_1})-(\ref{recurrent_eq_3}), we can acquire the following result: $\sum_{k=2}^{+\infty} (k-1) p_k^{\theta} = -\frac{\sum_{l=1}^{+\infty} l P_l^{\theta}}{P_0^{\theta}} p_0^{\theta} + \sum_{j=1}^{+\infty} \frac{j - \sum_{l=1}^{+\infty} (l+j-1) P_l^{\theta}}{P_0^{\theta}} p_j^{\theta} $. Note that because $\sum_{i=0}^{+\infty} P_{i}^{\theta} = 1$ and $\sum_{i=0}^{+\infty} i P_{i}^{\theta} = 2 \theta T_1$, we obtain: $ \sum_{k=2}^{+\infty} (k-1) p_k^{\theta} = - \frac{2 \theta T_1}{P_0^{\theta}} p_0^{\theta} + \sum_{j=1}^{+\infty} \frac{1 - 2 \theta T_1}{P_0^{\theta}} p_j^{\theta} + \sum_{k=2}^{+\infty} (k-1) p_k^{\theta}$, which leads to $p_0^{\theta} = 1 - 2 \theta T_1$. Therefore, we acquire: 

\begin{align} \label{avg_delay_eq}
\overline{D}(\theta) &= \frac{1}{\theta} \left( \frac{1}{2} \sum_{n=1}^{+\infty} p_n^{\theta} +  \sum_{n=1}^{+\infty} (n-1) p_n^{\theta} \right) = T_1 + \frac{1}{\theta} \sum_{n=1}^{+\infty} (n-1) p_n^{\theta}
\end{align}
The following proposition provides an upper bound for the average vehicle delay $\overline{D}(\theta)$ under a rather mild assumption; this assumption requires that at most two vehicles arrive in an interval between two consecutive entry time points. This is a reasonable setting because the time interval $2 T_1$ is only a bit larger than the minimum time required for two vehicles to pass. Details are stated below.

\begin{prop} \label{delay_bound_prop}
When $2 \theta T_1 < 1$, if $P_{i}^{\theta} = 0$ for all $i > 2$, then $\overline{D}(\theta) \le T_1 + \frac{T_1}{1 - 2\theta T_1}$.
\end{prop}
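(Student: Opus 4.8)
The plan is to reduce the statement to a sharp evaluation of the residual sum $S := \sum_{n=1}^{+\infty}(n-1)p_n^\theta$ and then bound it. Writing $\rho := 2\theta T_1$ for brevity, Eq.~\eqref{avg_delay_eq} already gives $\overline{D}(\theta) = T_1 + \tfrac{1}{\theta} S$, and the preceding derivation supplies $p_0^\theta = 1 - \rho$ together with the normalizations $\sum_i P_i^\theta = 1$ and $\sum_i i P_i^\theta = \rho$. Since $\tfrac1\theta = \tfrac{2T_1}{\rho}$, the claimed inequality $\overline{D}(\theta) \le T_1 + \tfrac{T_1}{1-\rho}$ is equivalent to $S \le \tfrac{\rho}{2(1-\rho)}$. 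Thus it suffices to evaluate $S$ under the hypothesis $P_i^\theta = 0$ for $i>2$, for which the arrival generating function $\Pi(z) := \sum_{i\ge0} P_i^\theta z^i = P_0^\theta + P_1^\theta z + P_2^\theta z^2$ is a quadratic.

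First I would obtain two relations among the queue moments by taking weighted sums of the balance equations \eqref{stationary_eq_1}, which in explicit form read $p_i^\theta = p_0^\theta P_i^\theta + \sum_{j=1}^{i+1} p_j^\theta P_{i-j+1}^\theta$. Multiplying by $i$ and summing over $i\ge0$, then changing the order of summation (setting $m=i-j+1$), the inner arrival sum $\sum_m (j+m-1)P_m^\theta = (j-1)+\rho$ is linear in $j$, giving $L = \rho + S$, where $L := \sum_{n\ge1} n\,p_n^\theta$ is the mean queue length; this merely re-expresses $S = L-\rho$. The decisive step is to multiply the balance equations by $i(i-1)$ and sum: now the inner arrival sum is quadratic in $j$, so the genuinely divergent-looking term $\sum_n n^2 p_n^\theta$ appears identically on both sides and cancels, precisely because the bounded arrival support makes $\sum_i i(i-1)P_i^\theta = 2P_2^\theta$ finite. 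Using $p_0^\theta = 1-\rho$ and simplifying then yields the closed form $L = \rho + \tfrac{P_2^\theta}{1-\rho}$, whence $S = \tfrac{P_2^\theta}{1-\rho}$. (Equivalently, one may derive from \eqref{stationary_eq_1} the Pollaczek--Khinchine-type functional equation $G(z)\,(\Pi(z)-z) = p_0^\theta\,\Pi(z)(1-z)$ for $G(z) := \sum_n p_n^\theta z^n$; since both sides vanish at $z=1$, differentiating the product twice at $z=1$ extracts $G'(1) = \rho + \tfrac{P_2^\theta}{1-\rho}$, and $S = G'(1)-\rho$.)

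It then remains only to bound $P_2^\theta$. From $\sum_i i P_i^\theta = P_1^\theta + 2P_2^\theta = \rho$ and $P_1^\theta \ge 0$ we get $2P_2^\theta \le \rho$, i.e. $P_2^\theta \le \rho/2$. Hence $S = \tfrac{P_2^\theta}{1-\rho} \le \tfrac{\rho}{2(1-\rho)}$, which is exactly the reduced inequality; substituting back gives the exact delay $\overline{D}(\theta) = T_1 + \tfrac{2P_2^\theta T_1}{\rho(1-\rho)} \le T_1 + \tfrac{T_1}{1-\rho}$, with equality iff $P_1^\theta = 0$ (all arrivals coming in pairs). The positivity $1-\rho>0$ used throughout is guaranteed by the stability hypothesis $2\theta T_1 < 1$.

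The main obstacle I anticipate is rigor in the moment manipulation rather than the algebra: one must first know that the stationary chain has finite first and second moments, so that the rearrangements and the cancellation of $\sum_n n^2 p_n^\theta$ are legitimate. I would secure this up front by noting that, for large $i$, the recurrence \eqref{recurrent_eq_3} degenerates to the constant-coefficient relation $P_0^\theta p_{i+1}^\theta = (1-P_1^\theta)p_i^\theta - P_2^\theta p_{i-1}^\theta$, whose characteristic roots are $1$ and $P_2^\theta/P_0^\theta$; stability selects the summable branch with $P_2^\theta/P_0^\theta < 1$ (itself a consequence of $\rho<1$, since then $P_2^\theta < P_0^\theta$), so $p_i^\theta$ decays geometrically and all moments converge. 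With geometric tail decay in hand, every interchange of summation above is justified and the cancellation is valid.
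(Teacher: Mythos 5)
Your proof is correct, but it follows a genuinely different route from the paper's. The paper proves the bound by a quadratic Lyapunov (drift) argument on the sample-path recursion $w_{k+1}=w_k+\lambda_k-\mu_k$: it bounds $\mathbb{E}(w_{k+1}^2-w_k^2\,|\,w_k)$ by $2(2\theta T_1-1)w_k$ plus a constant (using $4\theta T_1$ when $w_k=0$ and $1$ when $w_k\ge 1$, weighted by $p_0^\theta=1-2\theta T_1$), telescopes over $k$, and reads off a bound on the Ces\`{a}ro average $\lim_M \frac{1}{M}\sum_k \mathbb{E}(w_k)$ before applying Little's law. You instead solve the stationary balance equations exactly: the Pollaczek--Khinchine-type functional equation $G(z)\bigl(\Pi(z)-z\bigr)=p_0^\theta\,\Pi(z)(1-z)$ with quadratic $\Pi$, differentiated twice at $z=1$, gives the closed form $S=\sum_{n\ge 1}(n-1)p_n^\theta=\frac{P_2^\theta}{1-\rho}$, and the bound then follows from the elementary constraint $P_2^\theta\le\rho/2$. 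Your computations check out ($1-P_1^\theta=P_0^\theta+P_2^\theta$ makes the characteristic polynomial factor as $(x-1)(P_0^\theta x-P_2^\theta)$, and $\rho<1$ indeed forces $P_2^\theta<P_0^\theta$, so the geometric-tail justification of the moment interchanges is sound), and your result is strictly stronger: you obtain the exact delay $T_1+\frac{2P_2^\theta T_1}{\rho(1-\rho)}$ and identify the equality case $P_1^\theta=0$, whereas the paper only gets the upper bound. What the paper's drift approach buys in exchange is robustness: the same Lyapunov calculation is reused almost verbatim in Proposition \ref{admissible_prop} for arbitrary arrival distributions with finite second moment, where your generating-function evaluation of $\Pi''(1)$ would no longer collapse to a single term and the sharp constant would be lost.
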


The proof is presented in Appendix B. Note that the conclusion in Proposition \ref{delay_bound_prop} applies to any vehicle arrival process as long as $P_{i}^{\theta} = 0$ for all $i > 2$, including some scenarios where vehicles arrive in bunches. It suggests that RC is robust to a certain level of arrival fluctuations. However, the result does not apply to the scenarios where there is periodical fluctuation with a relatively long cycle (e.g., the downstream flow of a traffic light); this is because the independence assumption is violated there. We will demonstrate the average vehicle delays under those scenarios with numerical experiments.\par

Next, we identify a special case that leads to an analytical solution of the average vehicle delay, i.e., Eq.(\ref{avg_delay_eq}); the case requires the vehicle arrivals follow Poisson's process. In this context, the vehicle queuing process can be regarded as a M/D/1 system with vacation time, and the vacation time is $2T_1$ as in the period between two entry time points, no vehicle is allowed to pass through in one lane. From Lee (1989), the average waiting time of the customers can be calculated by:
\begin{align*}
    & W=\frac{\lambda/\mu^2}{2(1-\lambda/\mu)}+\frac{v}{2} & \frac{\lambda}{\mu}<1 \text{ and } \lambda\times\mu<1
\end{align*}

where $\lambda$, $\mu$ and $v$ denote arrival rate, departure rate and vacation time, respectively. Therefore, the average delay for vehicles under the RC scheme can be derived accordingly as:
\begin{align} \label{delay_poisson_eq}
    & \overline{D}(\theta)=\frac{\theta(2T_1)^2}{2(1-2\theta T_1)}+\frac{2T_1}{2}=\frac{T_1}{1-2\theta T_1} & 2\theta T_1<1
\end{align}

The above results provide abundant insights on the average vehicle delay under the RC scheme. Recall that $2\theta T_1$ is the average number of arrival vehicles during one time window. We can conclude from Proposition \ref{delay_bound_prop} that the queue is always bounded when the traffic volume is no more than one vehicle per $2T_1$ unit of time. Furthermore, given a fixed $T_1$, the average vehicle delay  $\overline{D}(\theta)$ exhibits a shape of “reversed inverse proportion function” as shown in Fig.\ref{vehicledelay_fig} under Poisson's vehicle arrival setting. This type of function starts with $T_1$ at $\theta=0$, and then increases slowly with $\theta$ within a large range; its slope becomes steep only when $2\theta T_1$ is approaching one. As observed in Fig.\ref{vehicledelay_fig}, the average vehicle delay is almost negligible (i.e., less than five seconds) when $2\theta T_1$ is not so close to one. The average delay can be further reduced by decreasing $T_1$.

\begin{figure}[!ht]
	\centering
	\includegraphics[width=0.65\textwidth]{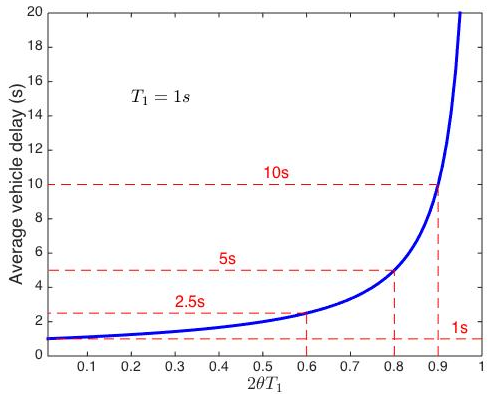}
	\caption[]{Average vehicle delay under Poisson's arrival when $T_1=1s$}
	\label{vehicledelay_fig}
\end{figure}
\par

\subsection{Admissible demand set} \label{subsec_AdmissibleDemand}
\noindent
The definition of the term “admissible demand set” is presented below.

\begin{defi}
The admissible demand set for a control policy is the set of demand rates of all approaches such that the queue lengths are bounded on all lanes by the control policy.
\end{defi}

Proposition \ref{delay_bound_prop} proves the boundedness of vehicle queue on a lane under the mild assumption that $P_i^{\theta} = 0$ for all $i > 2$. In this section, we will provide a more general result on the admissible demand set of RC that holds for all possible vehicle arrivals. The details are presented below.

\begin{prop} \label{admissible_prop}
Under any vehicle arrivals satisfying $\sum_{l=1}^{+\infty} l^2 P_{l}^{\theta} < +\infty$, the admissible demand rate of RC is given by
\begin{align}\label{admissible demand_eq}
& \theta_i < \frac{1}{2T_1} =  \frac{v_m}{2(L+w+\sqrt{2}\delta)} & \forall i \in \{1,2,...,8\}
\end{align}
where $\theta_i$ denotes the demand rate per lane for approach $i$.
\end{prop}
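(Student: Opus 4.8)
The plan is to reduce the eight-lane statement to a single-lane stability question and then settle it with a drift (Foster--Lyapunov) argument on the queueing Markov chain $\{w_k\}$ introduced in Section~\ref{subsec_delay}. First I would observe that under the RC rhythm every lane receives an entry opportunity exactly once per interval of length $2T_1$, at which a single waiting vehicle may depart, and that arrivals in disjoint intervals are independent by assumption. Consequently the queue on each approach $i$ evolves as an independent copy of the chain $\{w_k\}$ driven by its own arrival law $\{P_\ell^{\theta_i}\}$, with mean per-slot arrival $\mu_i := \sum_{\ell\ge 1}\ell\,P_\ell^{\theta_i}=2\theta_i T_1$. The admissible demand set is then the set of $(\theta_1,\dots,\theta_8)$ for which each such chain has a bounded (finite-mean) stationary queue, so it suffices to characterize, for a single lane, the $\theta$ for which $\{w_k\}$ is positive recurrent with a finite stationary mean, and to intersect the resulting condition across $i\in\{1,\dots,8\}$.

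For sufficiency ($\theta_i<1/(2T_1)$, i.e. $\mu_i<1$) I would use the quadratic Lyapunov function $V(n)=n^2$. Writing $w_{k+1}=n-1+J$ for $n\ge 1$, where $J$ is the number of arrivals in a slot, the one-step drift is
\begin{align*}
\mathbb{E}[V(w_{k+1})-V(w_k)\mid w_k=n]=-2(1-\mu_i)\,n+\bigl(1-2\mu_i+\mathbb{E}[J^2]\bigr),
\end{align*}
which is $\le -\epsilon\, n$ for some $\epsilon>0$ once $n$ exceeds a finite threshold, precisely because $1-\mu_i>0$ and the additive term is finite. Here the standing hypothesis $\sum_\ell \ell^2 P_\ell^{\theta}<\infty$ enters crucially, since it guarantees $\mathbb{E}[J^2]<\infty$ and hence that the constant is finite; the moment form of the Foster--Lyapunov drift criterion then yields both positive recurrence and a finite stationary mean queue length, i.e. a bounded queue.

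For necessity I would argue that $\mu_i\ge 1$ precludes a bounded queue. The cleanest route reuses the identity already derived in Section~\ref{subsec_delay}: any stationary distribution must satisfy $p_0^{\theta}=1-2\theta_i T_1$, so a genuine (nonnegative, normalizable) stationary law forces $2\theta_i T_1<1$. Equivalently, for $n\ge 1$ the mean drift equals $\mu_i-1\ge 0$, so $\{w_k\}$ is not positive recurrent: when $\mu_i>1$ a positive-drift strong-law argument gives $w_k\to\infty$ almost surely (transience), while the borderline $\mu_i=1$ is null recurrent, with no stationary law and $\limsup_k w_k=\infty$. Either way the queue is unbounded, so admissibility fails. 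Combining the two directions and substituting $T_1=(L+w+\sqrt{2}\delta)/v_m$ yields the stated set $\theta_i<1/(2T_1)=v_m/\bigl(2(L+w+\sqrt{2}\delta)\bigr)$ on every lane.

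I expect the main obstacle to be the sufficiency step's passage from a negative mean drift to a \emph{bounded stationary mean}: a linear Lyapunov function $V(n)=n$ would already give positive recurrence under only a finite first moment, but to certify the finite expected queue length that ``bounded'' requires---and thereby to justify the role of the second-moment hypothesis---one needs the quadratic drift above together with the moment-bound version of the criterion, taking care that the additive constant (hence $\mathbb{E}[J^2]$) is finite. The critical case $\mu_i=1$ in the necessity direction is the other delicate point, since the chain stays recurrent there and one must establish null recurrence rather than merely nonnegativity of the drift.
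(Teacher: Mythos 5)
Your proposal is correct and takes essentially the same route as the paper: the sufficiency direction is precisely the paper's argument, which carries out the Foster--Lyapunov computation explicitly by telescoping $\mathbb{E}(w_{k+1}^2 - w_k^2 \mid w_k) \le K + 2(2\theta T_1 - 1)w_k$ over $k$ to bound the time-averaged queue by $K/(2-4\theta T_1)$, with the hypothesis $\sum_{l} l^2 P_l^{\theta} < +\infty$ used in exactly the same place to make the additive constant $K$ finite. For the necessity direction the paper is, if anything, less careful than you are---it appeals to the Poisson formula in Eq.(\ref{delay_poisson_eq}) for the boundary and a verbal ``arrival rate exceeds the maximum service rate'' argument for $\theta_i > 1/(2T_1)$---so your route via $p_0^{\theta} = 1 - 2\theta T_1 \ge 0$ plus transience/null recurrence is an acceptable substitute, with the caveat that the degenerate case of deterministic single arrivals at $\mu_i = 1$ (where the queue is in fact bounded) is glossed over by both treatments.
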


The proof is provided in Appendix B. Note that $\sum_{l=1}^{+\infty} l^2 P_{l}^{\theta} < +\infty$ always holds in reality since the number of vehicles arriving within an interval is always finite. From Proposition \ref{admissible_prop}, we see that the admissible demand set is a hypercube (excluding the surfaces).

In the following, we adopt some realistic settings and provide a quantitative illustration of the admissible demand set of RC. Let the maximum speed at an intersection be $v_m=10m/s$ and vehicle size be $L=4.5m$ and $w=2m$. Furthermore, the minimum allowable distance between two vehicles is $\delta=1m$,  based on the results of Rajamani and Shladover (2001). Note that the minimum allowable distance is always set to be smaller than that of human-driven vehicles, considering that CAV technologies could reduce the reaction time of vehicles and V2V technologies could assist vehicles to communicate with each other and achieve cooperative driving. Then, the admissible demand rate per lane is $\frac{v_m}{2(L+w+\sqrt{2}\delta)}\approx0.63 \ veh/s\approx2,274 \ veh/h$, i.e., a lane at the intersection under RC can accommodate approximately $2,270 \ veh/h$, which is even larger than the maximum lane capacity of a motorway in the current manually driven vehicle environment, i.e., approximately $1,800–2,400 \ veh/h$ per lane (Immers and Logghe, 2002). Again, note that many lanes at the intersection can simultaneously active the RC; this implies that RC can considerably improve the intersection throughput compared to the TSC.

\section{Numerical experiments} \label{sec_numerical}

\subsection{Basic settings} \label{subsec_Basicsetting}

\noindent
This section presents simulation results to demonstrate the performances of RC in various scenarios. The benchmarks include the TSC and reservation-based scheme in the first-come-first-serve protocol (FCFS; Dresner and Stone, 2004). Note that the TSC is also implemented in a fully CAV environment; thus, vehicles can pass through an intersection within a green phase at a considerably higher rate than human-driven vehicles.

In all the tests below, we set $\delta=1 m$, vehicle length, $L=4.5 m$, and width, $w=2 m$; the speed at the intersection is $v_m=10 m/s$. It is assumed that RC exhibits a systematic delay of one second in all vehicles because of the redesign of the intersection layout (Section \ref{subsec_Implementing}). The phase transition time loss is set as two seconds for the TSC; with this loss, Webster’s method (Webster, 1958) can be adopted to calculate the timing allocation and cycle length of the TSC. The minimum allowable phase timing is $g_{min}=4 s$, and the maximum allowable cycle length for the TSC is $180 s$. The time interval for two consecutive reservations in the FCFS is set to $0.1 s$. All simulation programs are coded using MATLAB 2018a on a laptop with 2.4-GHz Intel Core and 4-GB RAM.

\subsection{Tests at a symmetric intersection} \label{subsec_testonsym}

\noindent
First, simulation tests at a symmetric intersection with $n_s=3$ and $n_l=2$ are performed. Three different scenarios, i.e., balanced demand scenario ($\mathbf{d}_b$), imbalanced scenario ($\mathbf{d}_i$), and highly imbalanced scenario ($\mathbf{d}_h$) are tested; their values are given by
\begin{align*}
    & \mathbf{d}_b = \alpha\times [1300,1300,1300,1300,1100,1100,1100,1100] \\
    & \mathbf{d}_i = \alpha\times[1600,1600,1600,1600,800,800,800,800] \\
    & \mathbf{d}_h = \alpha\times[2600,1400,1400,1400,400,400,400,400]
\end{align*}
where $\alpha$ is a scaling factor that represents the traffic demand level. In each demand vector, the first four elements represent four through-approach demands (per lane), and the last four elements are the four left-turn-approach demands (per lane); in $\mathbf{d}_i$, the demands per lane in the four through approaches are larger than those in the four left-turn approaches; and in $\mathbf{d}_h$, the demands per lane in one approach are distinctively larger than the others. Note that the sum of maximum demands in all four phases is set as a constant, i.e., $4,800$, for all three scenarios. To achieve more comprehensive comparison results, the performances are tested in both stationary and non-stationary vehicle arrivals.

\subsubsection{Stationary vehicle arrivals} \label{subsubsec_stationary}

\noindent
The stationary vehicle arrival scenario implies that the vehicle arrivals on each lane follow a time-invariant process with a headway that obeys a shifted exponential distribution, i.e., the intensity of vehicle arrivals in all approaches does not vary with time,  and they are evenly divided onto all lanes compatible with the associated movements. The simulation results in all scenarios are given in Figs.\ref{stat_1_fig}(a)-(c), where the left column denotes average delay and the right column denotes vehicle throughput (defined as the total number of vehicles passed through the intersection within the simulation time).
\begin{figure}[!ht]
	\centering
	\subfloat[][Balanced demand scenario]{\includegraphics[width=0.8\textwidth]{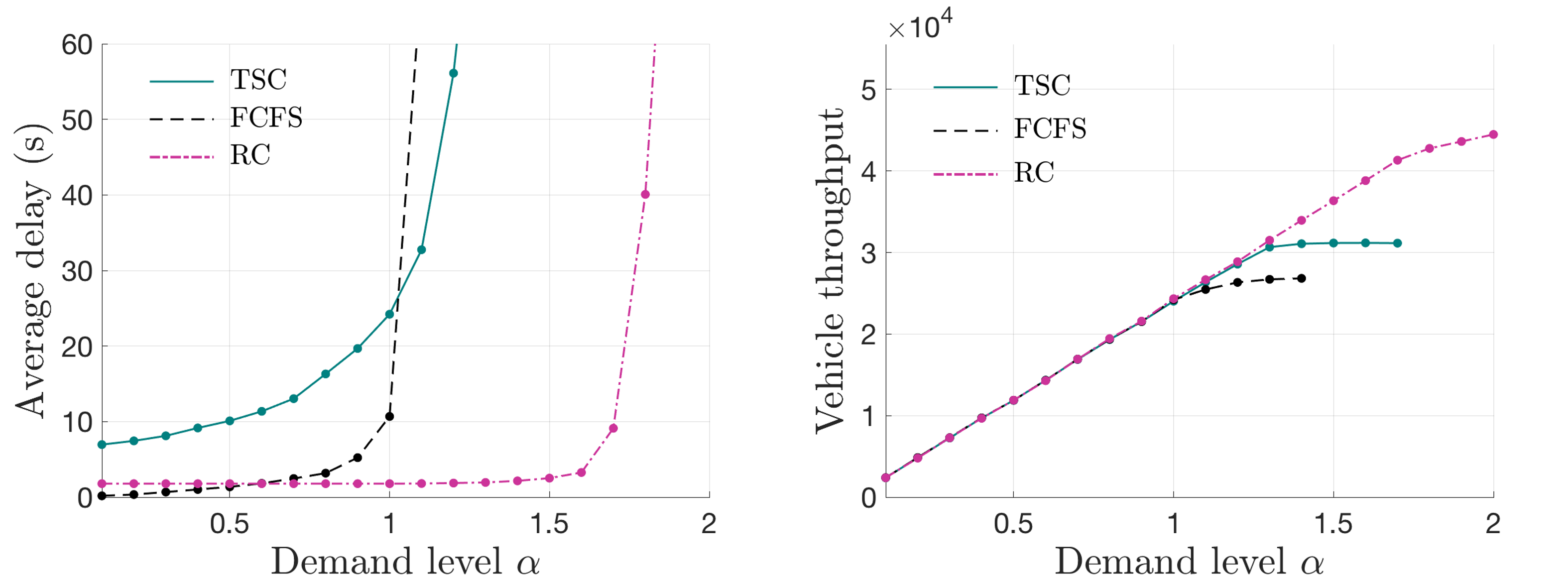}}
	\\
	\subfloat[][Imbalanced demand scenario]{\includegraphics[width=0.8\textwidth]{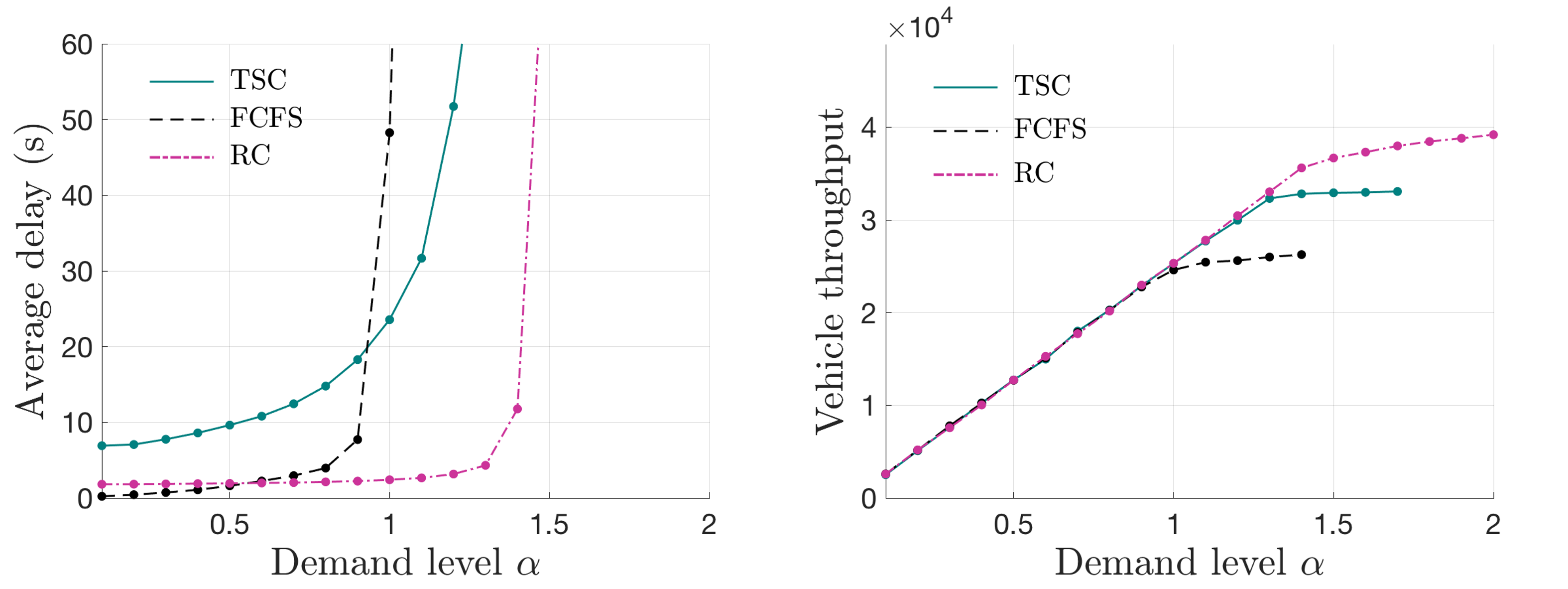}}
	\\	
	\subfloat[][Highly imbalanced demand scenario]{\includegraphics[width=0.8\textwidth]{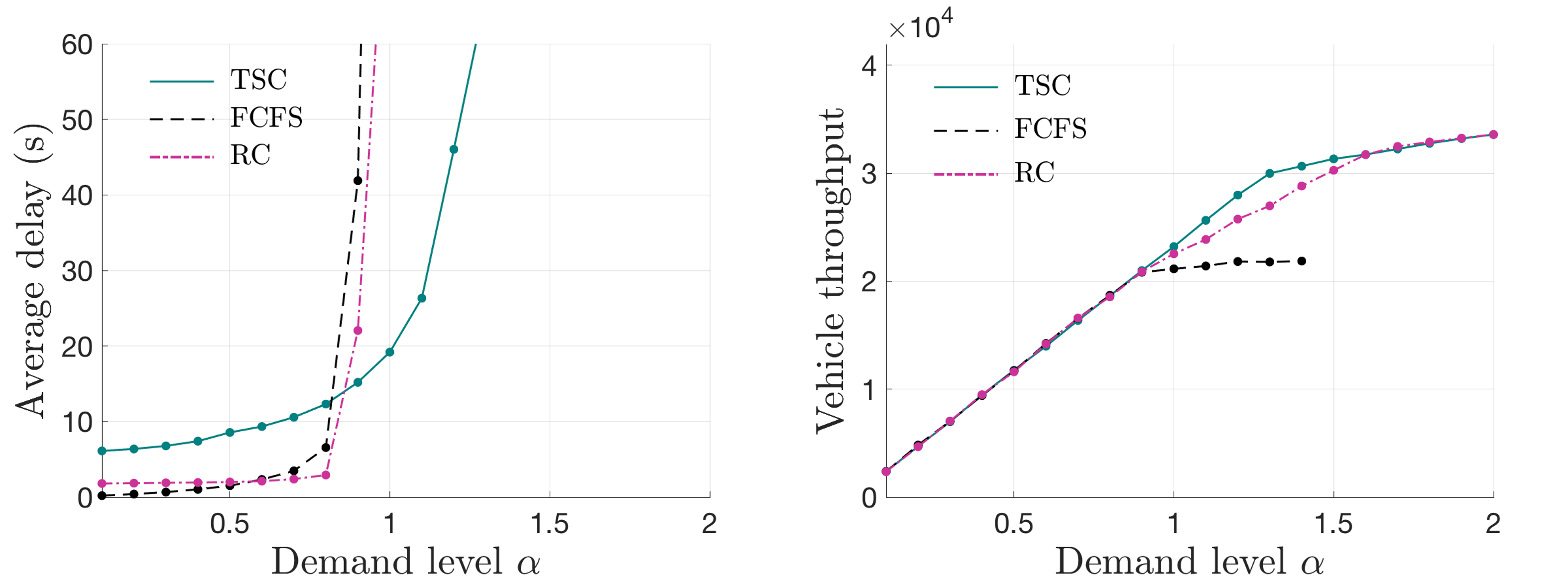}}
	\\
	\caption[]{Comparison of results with stationary vehicle arrivals}
	\label{stat_1_fig}
\end{figure}

The major observations and implications that can be derived from Fig.\ref{stat_1_fig} are summarized as follows. 
\begin{enumerate}
\item For low-demand scenarios (e.g., $\alpha \le{0.6}$), both FCFS and RC are all capable of achieving a practically zero delay. This is a major advantage for automated intersection control schemes as claimed by several publications in literature. Note that when the demand is extremely low (e.g., $\alpha \le{0.2}$), the average delay of FCFS is slightly lower than that of RC; this is attributed to the fact that a systematic delay of $1 s$ is added to RC. 
\item As $\alpha$ increases, RC is the last to become over-saturated (i.e., the average delay is unacceptably high) and thus achieves the maximum throughput among the three control schemes in the balanced and imbalanced demand scenarios. That reflects the superior performance of RC on the intersection capacity. Meanwhile, it is observed that the FCFS exhibits the lowest admissible demand set among all control schemes implemented in this test. 
\item In the comparison between TSC and RC, it is first observed that the average vehicle delay of RC is considerably smaller than that of TSC. Moreover, the range of $\alpha$ corresponding to the under-saturation intersection under RC is distinctly larger than that under TSC in the balanced and imbalanced demand scenarios. 
\item As the demand pattern becomes more imbalanced, the advantage of RC over TSC becomes less significant, i.e., the difference between the ranges of $\alpha$ corresponding to the under-saturation intersection under RC and TSC reduces. In the highly imbalanced demand scenario, RC's range of $\alpha$ even becomes smaller than TSC's. The possible reason is that, although RC is capable of breaking the limitation that ROW can only be allocated to non-conflicting movements at a time, its ROW allocation tends to be balanced among different lanes, as demonstrated in Fig.\ref{fig_ROWRC}. Therefore, as the demand pattern becomes imbalanced, the utilization rate of the ROW allocated by RC will be compromised. Note that in the highly imbalanced demand scenario, when the average delay of RC rapidly increases with the demand, the vehicle throughput can still keep increasing. That is because the ROW allocated to the low-demand movement is not fully utilized when the demand level is low, leaving potential for handling more vehicles in the view of throughput. In fact, the imbalance issue of RC could be resolved by appropriately designing the lane number for each approach at the planning stage of an intersection, suggesting that integrating the design of an intersection with the RC scheme can further enhance its performance. 
\end{enumerate}

\subsubsection{Non-stationary vehicle arrivals}\label{subsubsec_nonstationary}
\begin{figure}[!ht]
	\centering
	\subfloat[][Balanced demand scenario]{\includegraphics[width=0.8\textwidth]{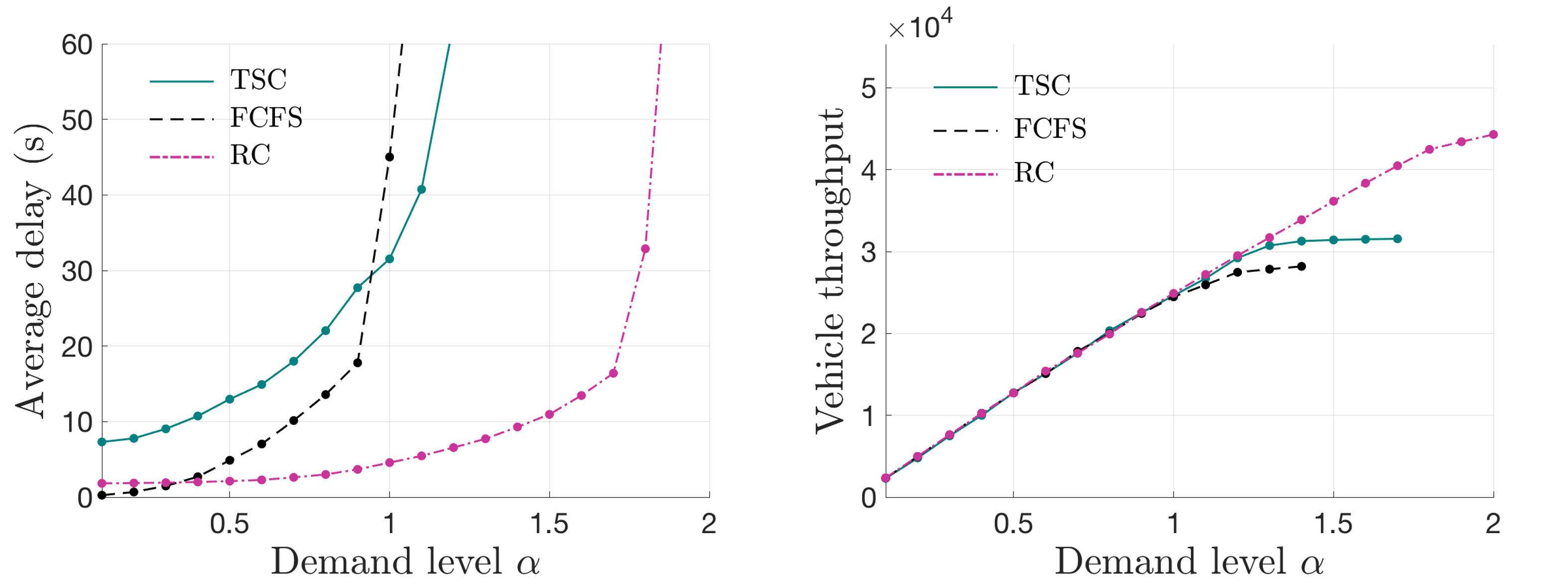}}
	\\
	\subfloat[][Imbalanced demand scenario]{\includegraphics[width=0.8\textwidth]{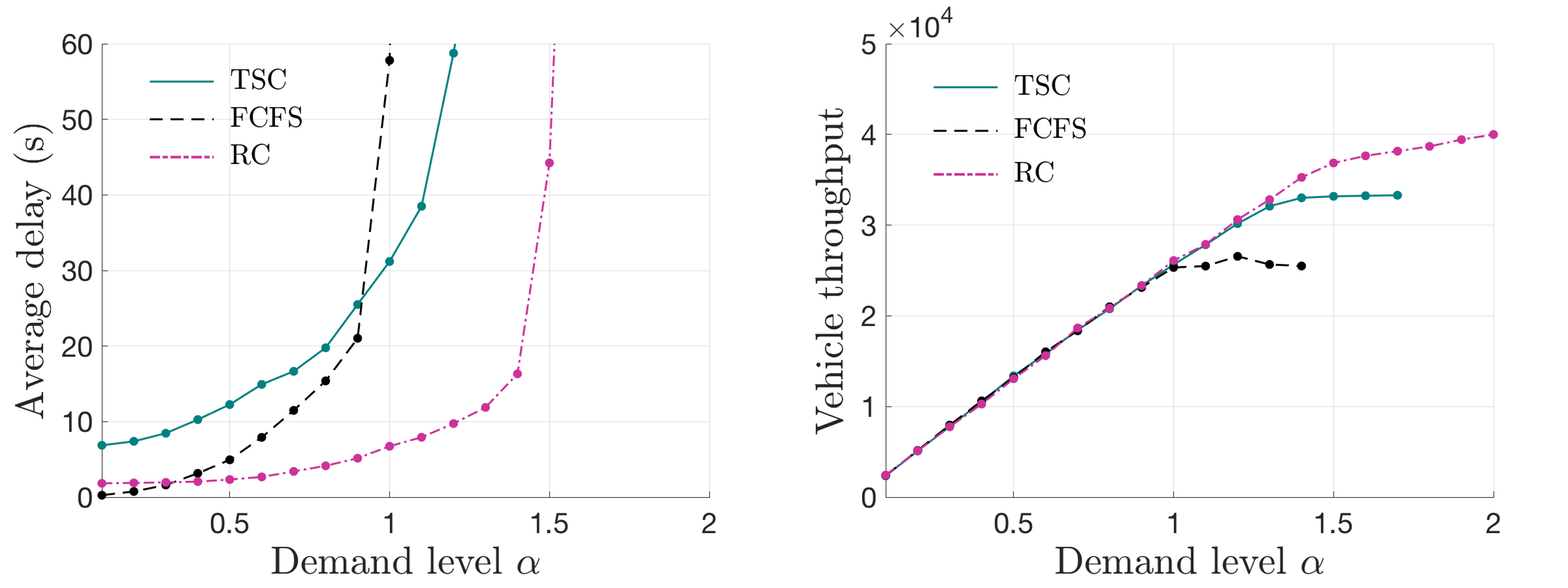}}
	\\	
	\subfloat[][Highly imbalanced demand scenario]{\includegraphics[width=0.8\textwidth]{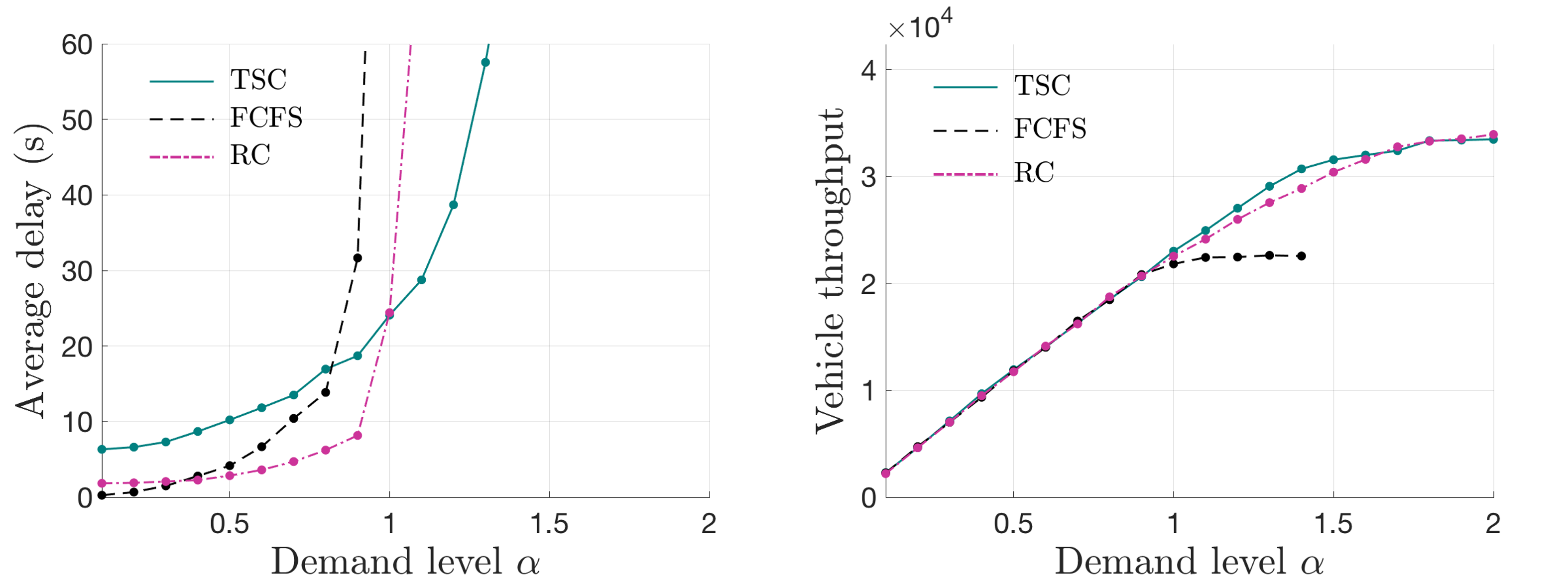}}
	\\
	\caption[]{Comparison of results with non-stationary vehicle arrivals} 
	\label{nonstat_1_fig}
\end{figure}
\noindent
Real traffic, especially on urban roads, typically exhibits non-stationary dynamics. In the following tests, the intensities of demands in all lanes are set to oscillate periodically in order to compare performances in non-stationary vehicle arrivals. Specifically, for every $200s$, there are $50s$ of “intensive arrival” and $150s$ of “mild arrival,” where the former exhibits quadruple intensity compared to the latter. The three 
scenarios, i.e., $\mathbf{d}_b$, $\mathbf{d}_i$ and $\mathbf{d}_h$, are still adopted. Figs.\ref{nonstat_1_fig}(a)–(c) illustrate the comparisons of performances with non-stationary vehicle arrivals.

Fig.\ref{nonstat_1_fig} shows that the comparison results are similar to those of the stationary arrival. It is observed that the average delays of TSC, FCFS and RC increase faster in the non-stationary arrival than in the stationary arrival. For example, when $\alpha=1.5$ in the balanced demand scenario, RC causes an average delay of approximately $3s$ in the stationary arrival and $10s$ in the non-stationary scenario; a similar phenomenon can be observed in imbalanced demand scenarios. This is because in the non-stationary arrival, a proportion of vehicle arrivals is more clustered; this results in temporal queues, which increase the average delay.

\subsection{Tests at an asymmetric intersection} \label{subsec_testonasym}

\begin{figure}[!ht]
	\centering
	\subfloat[][Balanced demand scenario]{\includegraphics[width=0.8\textwidth]{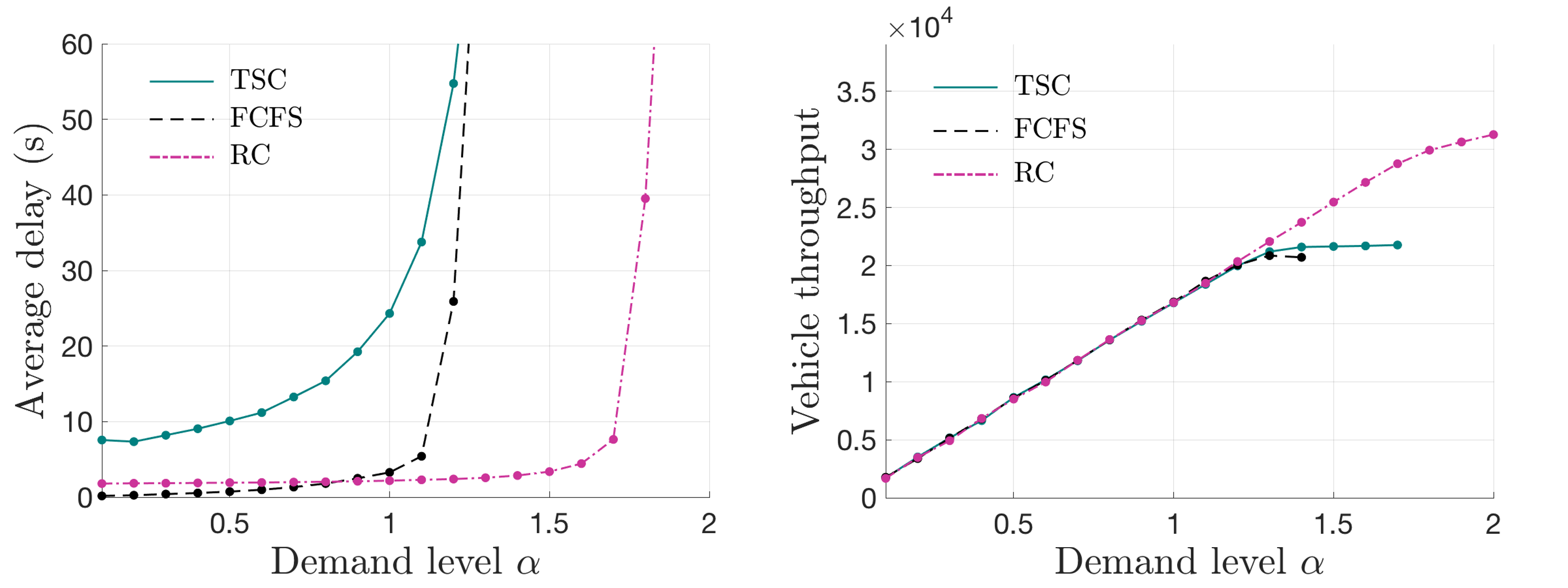}}
	\\
	\subfloat[][Imbalanced demand scenario]{\includegraphics[width=0.8\textwidth]{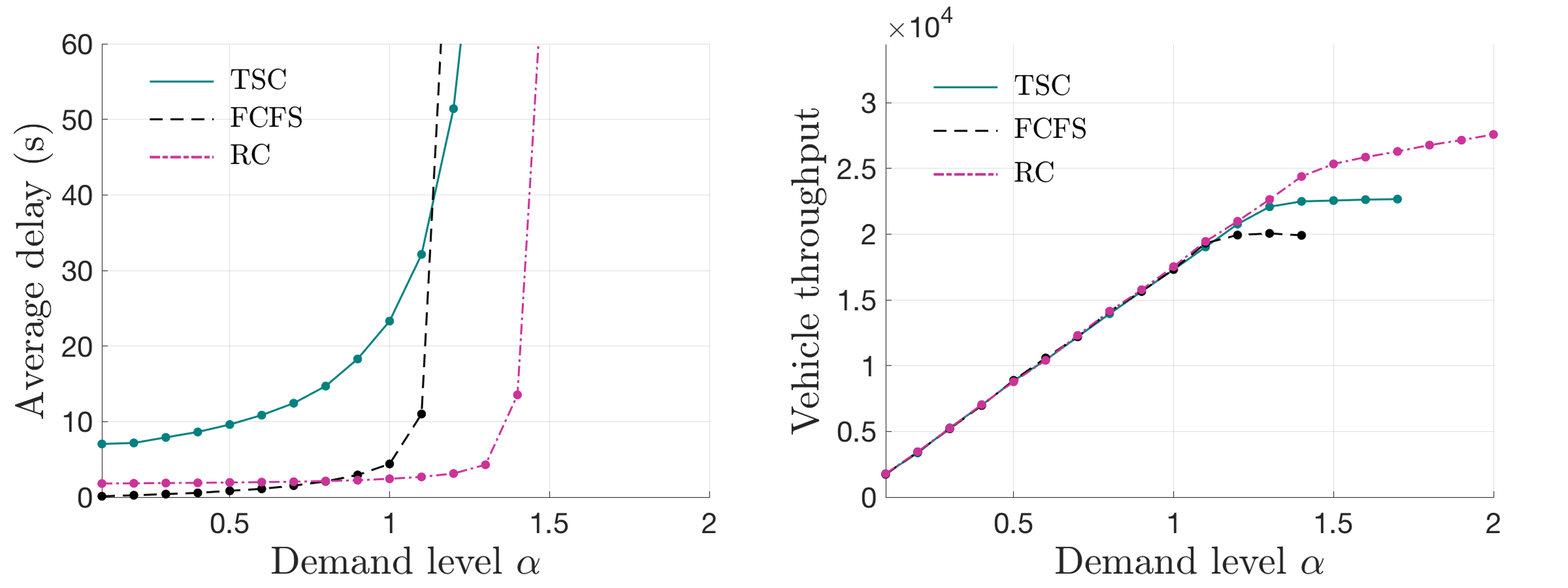}}
	\\	
	\subfloat[][Highly imbalanced demand scenario]{\includegraphics[width=0.8\textwidth]{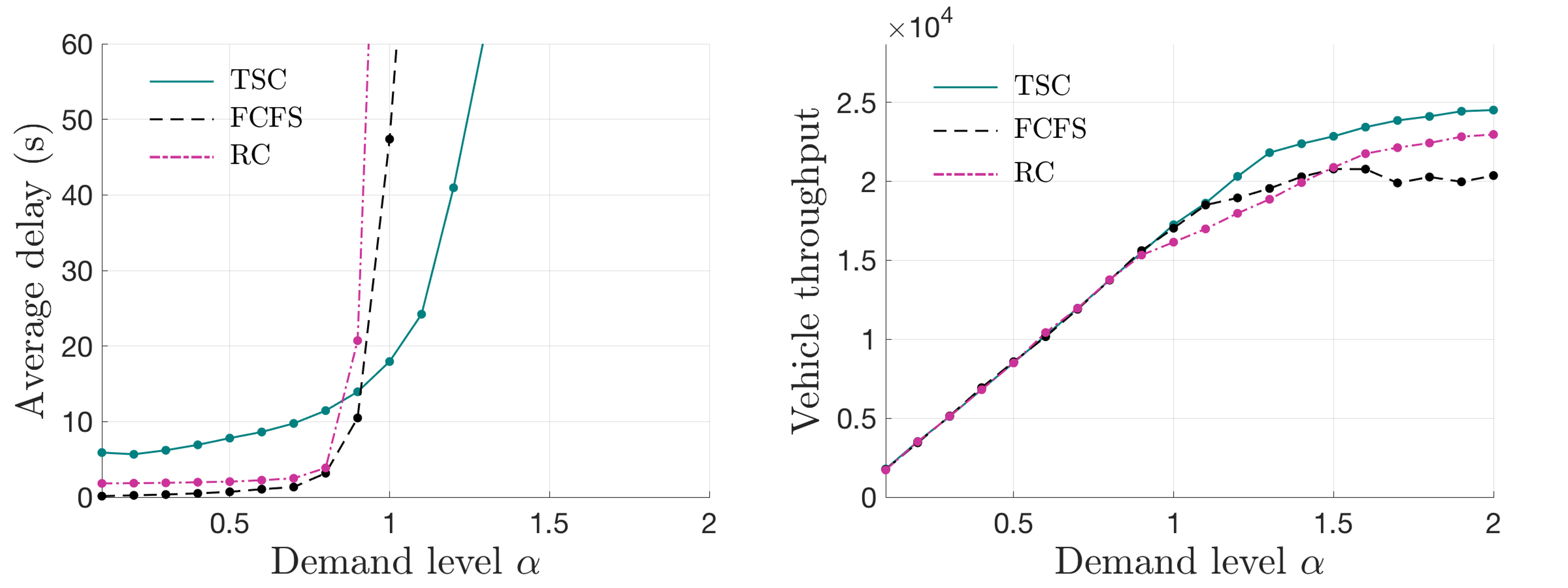}}
	\\
	\caption[]{Comparison of results at an asymmetric intersection} 
	\label{asy_1_fig}
\end{figure}

\noindent
To demonstrate the effectiveness of RC at an asymmetric intersection, simulation experiments are conducted on a four-leg intersection: legs 1 and 3 have three through lanes and two left-turn lanes, whereas legs 2 and 4 have one through lane and one left-turn lane. The vehicle arrival is set to be stationary, and the scenarios are the same as that in symmetric tests described above; results are shown in Fig.\ref{asy_1_fig}.

The performances of RC in this asymmetric intersection are considerably similar to those in the symmetric intersection; this proves the applicability of the proposed control schemes in more general settings. Interestingly, it is observes that the capacity of FCFS in this asymmetric intersection slightly improved compared to that in the symmetric intersection. One possible reason is that the simplified conflicting relationship at the asymmetric intersection provides higher possibilities for vehicles in the FCFS to find feasible time windows to pass.

\section{Conclusions and discussions} \label{sec_conclusion}
\noindent
This paper proposes an innovative intersection control scheme, the rhythmic control (RC), in a fully CAV environment. Utilizing an appropriately designed layout of conflicting points at an intersection, the RC assigns predetermined time spots in a rhythmic way for vehicles entering intersection from each lane. It can fully coordinate the complicated conflicting relationships at the intersection under given conditions. Consequently, the real-time computational load is considerably alleviated with such a predetermined rhythm. The properties of RC, the average vehicle delay and admissible demand sets under stochastic arrivals, are investigated. Finally, extensive numerical experiments are performed in various demand and vehicle arrival scenarios. The results show that RC, even when it is compared to the advanced TSC in CAV environment, can significantly reduce vehicle delays and increase intersection capacity. Among the considered three control schemes, TSC in CAV environment, FCFS, and RC, the proposed RC owns the largest admissible demand set and introduces the lowest average vehicle delay in most scenarios except for the highly imbalanced demand scenario.\par

As mentioned above, the proposed RC focuses on the traffic management at an isolated intersection, and it is more suitable to be implemented in places where major roads intersect. For dense urban areas, we further propose grid-network rhythmic control framework of CAVs, where the coordination between intersections is required to promote the mobility of the whole network. As the study focuses and control details are quite distinct, we arrange the study on grid networks as a companion of this study (Part II); the detail can be found in Lin et al. (2020).\par

In practical implementations, there are always some unexpected control errors and vehicle dysfunctions that may threaten the safe operations of RC. Thus, we need to design backup control protocols for handling emergencies. Below we discuss some feasible remedies  to handle different situations:
\begin{enumerate}[(i)]
    \item  In most cases, the vehicle deviates from its planned trajectory within a limited range of error due to small disturbances on vehicle dynamics, and for this case, a minimum allowable distance is set between any two vehicles for ensuring safety, i.e., $\delta$, and integrated into the collision-free conditions. Therefore, within the range of $\delta$, vehicles won’t collide with each other. Also, with some longitudinal control methods, such as proportional integral derivative (PID) control and model predictive control, vehicles will catch up with the planned trajectory soon before further cumulative errors.
    \item  When the deviation of trajectory is beyond the range of $\delta$, the vehicle should be adjusted to other temporal trajectories, that is, advanced or delayed. As the entrance time points are fixed for each lane, it is simple to find a suitable trajectory for the vehicle according to its actual position and speed. Fig.\ref{traj_adjust_fig} shows an example where the vehicle deviates from the objective trajectory (red dotted line), and then it can be assigned to the adjacent feasible trajectory (red solid line); thus, no collision will occur as long as the vehicle arrives at the expected time point. Note that if there are other vehicles on the same lane, their trajectories may also be required to adjust correspondingly for avoiding longitudinal collision. However, if the problematic vehicle cannot catch up with the designed speed, then an alternative solution is to adjust the pace of RC to keep all other vehicles' paces with the problematic vehicle, so that all vehicles will slow down to the same pace to ensure collision-free condition.\par

   \begin{figure}
    \centering
    \includegraphics[width=0.6\textwidth]{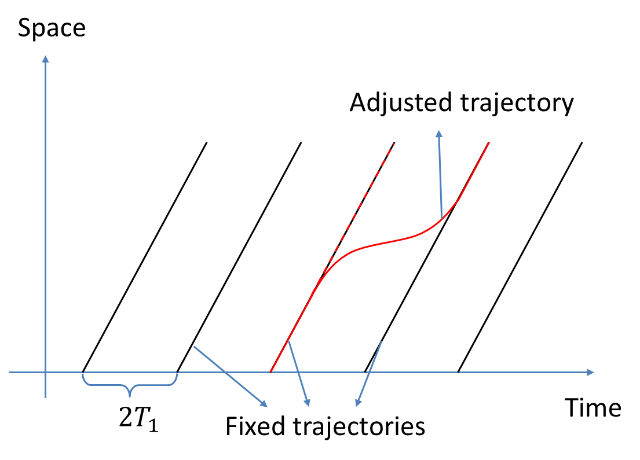}
    \caption{An example for trajectory adjustment}
    \label{traj_adjust_fig}
    \end{figure}
    \item For more extreme cases where a vehicle breaks down or an accident occurs, causing blocks in the intersection zone, adjusting vehicles’ trajectories within a small range is incapable of handling the situation. A simple solution is to turn off the availability of the affected lanes temporarily until the block has been removed. Fig.\ref{lane_turnoff_fig} shows an example where the block appears on an intersecting point of two lanes; thus, the two relevant lanes will be turned off temporarily with no impact on the traffic of other lanes. Note that here we just propose a feasible solution to handle the situation of blocks appearing. It is possible to develop more sophisticated strategies that can further reduce the impacts on the system, or can help the system recover from the malfunction soon. This will be valuable yet non-trivial task. We thus leave it for future investigation.\par 

    \begin{figure}
    \centering
    \includegraphics[width=0.5\textwidth]{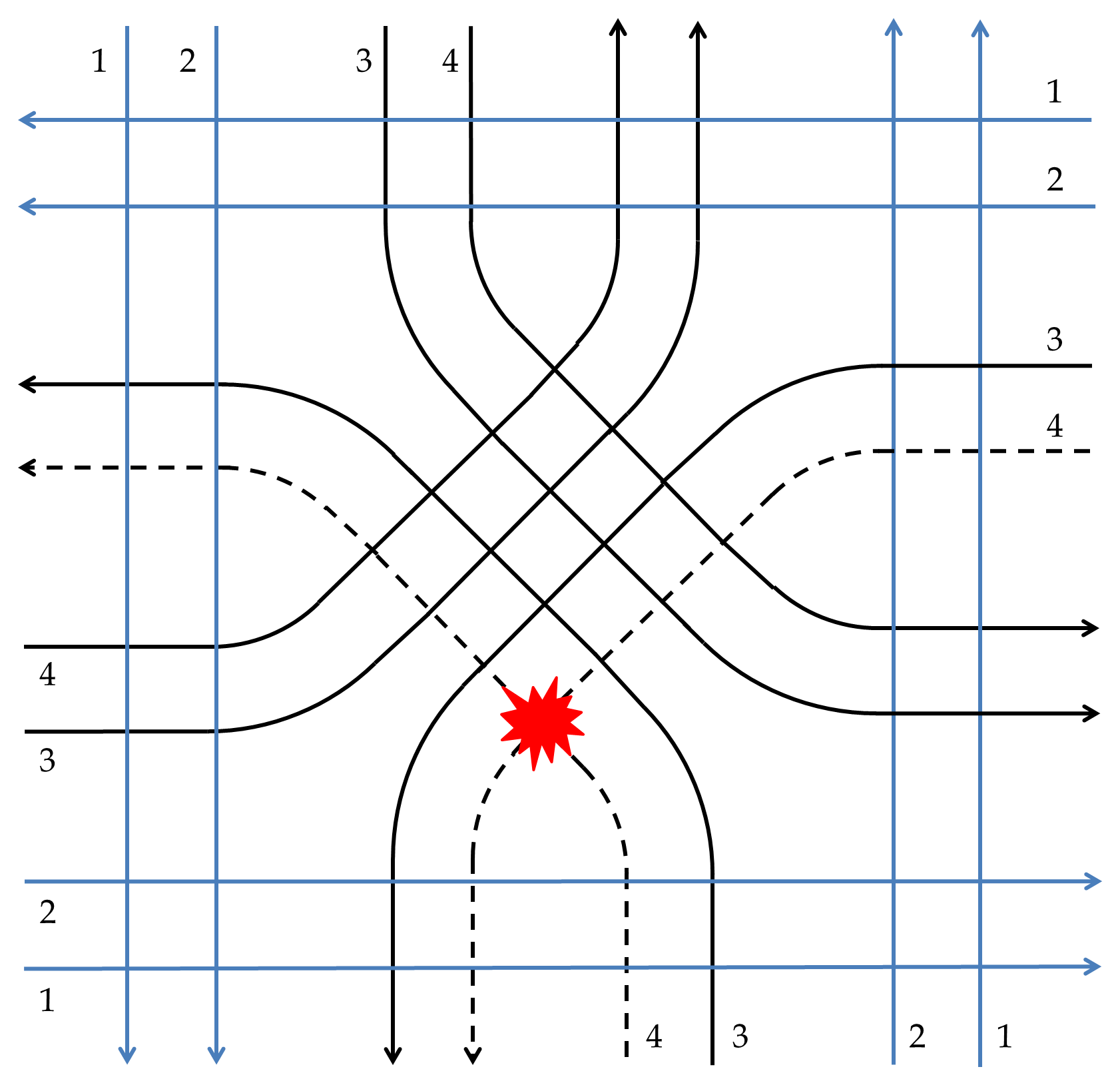}
    \caption{An example for lane turn-off}
    \label{lane_turnoff_fig}
    \end{figure}

\end{enumerate}
There are also extensive further investigations following the line of the proposed RC framework. Demonstrating the effectiveness of the proposed scheme at a real-world intersection is the first step. Further exploration is needed to accommodate RC to a more general intersection, where the layout is in the current form and has shared left-turning and through lanes. Furthermore, extremely imbalanced demand situations shall be handled when the design of an intersection layout possesses more freedom. Finally, besides proposing an innovative intersection traffic control method, this study also brings about the idea of "changing the form of land use in CAV era to support a better control method", and that could be very useful in guiding the design of future cities.

\section*{Acknowledgements}
\noindent This research is supported partially by grants from the National Natural Science Foundation of China (71871126, 51622807). Yafeng Yin thanks the support from the National Science Foundation (CNS-1837245 and CMMI-1904575) and Center for Connected and Automated Transportation (CCAT) at University of Michigan. This paper benefits significantly from the discussions with Prof. Zuo-jun Max Shen at University of California, Berkeley.

\section*{References}
\noindent Dresner, K. and Stone, P. (2004). Multiagent traffic management: A reservation-based intersection
control mechanism. In Proceedings of the Third International Joint Conference on Autonomous Agents
and Multiagent Systems, 2:530–537.\par 
\noindent Fajardo, D., Au, T. C., Waller, S., Stone, P., and Yang, D. (2011). Automated intersection control:
performance of future innovation versus current traffic signal control. Transport. Res. Rec.: J.
Transport. Res. Board, 2259:223–232.\par 
\noindent Feng, Y., Yu, C., and Liu, H. X. (2018). Spatiotemporal intersection control in a connected and
automated vehicle environment. Transportation Research Part C: Emerging Technologies, 89:364–
383.\par 
\noindent Immers, L. H. and Logghe, S. (2002). Traffic flow theory. faculty of engineering. Department of
Civil Engineering, Section Traffic and Infrastructure, 40:21.\par 
\noindent Lee, J. and Park, B. (2012). Development and evaluation of a cooperative vehicle intersection
control algorithm under the connected vehicles environment. IEEE Transactions on Intelligent
Transportation Systems, 13(1):81–90.\par 
\noindent Lee, T. T. (1989). M/g/1/n queue with vacation time and limited service discipline. Performance
Evaluation, 9(3):181–190.\par 
\noindent Levin, M. W., Boyles, S. D., and Patel, R. (2016). Paradoxes of reservation-based intersection
controls in traffic networks. Transportation Research Part A: Policy and Practice, 90:14–25.\par 
\noindent Levin, M. W. and Rey, D. (2017). Conflict-point formulation of intersection control for autonomous
vehicles. Transportation Research Part C: Emerging Technologies, 85:528–547.\par 
\noindent Li, Z., Chitturi, M., Zheng, D., Bill, A., and Noyce, D. (2013). Modeling reservation-based autonomous
intersection control in vissim. Transport. Res. Rec.: J. Transport. Res. Board, 2381:81–90.\par 
\noindent Li, Z., Elefteriadou, L., and Ranka, S. (2014). Signal control optimization for automated vehicles at
isolated signalized intersections. Transportation Research Part C: Emerging Technologies, 49:1–18.\par 
\noindent Lin, X., Li, M., Shen, Max, Z.-j., Yin, Y., and He, F. (2020). Rhythmic control of automated traffic
- part ii: Grid network rhythm and online routing. Transportation Science, under review\par 
\noindent Little, J. D. (1961). A proof for the queuing formula: L= l w. Operations research, 9(3):383–387.\par 
\noindent Muller, E. R., Carlson, R. C., and Junior, W. K. (2016). Intersection control for automated vehicles
with milp. IFAC-PapersOnLine, 49(3):37–42.\par 
\noindent NHTSA (2013). Preliminary statement of policy concerning automated vehicles. Technical report.\par 
\noindent Rajamani, R. and Shladover, S. E. (2001). An experimental comparative study of autonomous and
co-operative vehicle-follower control systems. Transportation Research Part C: Emerging Technologies,
9(1):15–31.\par 
\noindent Webster, F. V. (1958). Traffic signal settings. Technical report.\par 
\noindent Wu, J., Abbas-Turki, A., and El Moudni, A. (2012). Cooperative driving: an ant colony system
for autonomous intersection management. Applied Intelligence, 37(2):207–222.\par 
\noindent Xu, B., Li, S. E., Bian, Y., Li, S., Ban, X. J., Wang, J., and Li, K. (2018). Distributed conflictfree
cooperation for multiple connected vehicles at unsignalized intersections. Transportation
Research Part C: Emerging Technologies, 93:322–334.\par 
\noindent Yu, C., Feng, Y., Liu, H. X., Ma, W., and Yang, X. (2018). Integrated optimization of traffic
signals and vehicle trajectories at isolated urban intersections. Transportation Research Part B:
Methodological, 112:89–112.\par 
\noindent Yu, C., Sun, W., Liu, H. X., and Yang, X. (2019). Managing connected and automated vehicles at
isolated intersections: From reservation-to optimization-based methods. Transportation Research
Part B: Methodological, 112:416–435.\par 

\newpage
\setcounter{figure}{0}
\renewcommand\thefigure{A-\arabic{figure}}
\setcounter{equation}{0}
\renewcommand{\theequation}{A-\arabic{equation}}
\setcounter{table}{0}
\section*{Appendix A} \label{appendix_notation}
\begin{table}[!ht]
  \centering
    \caption*{Notations}
  \label{Nomenclature_table}
  \normalsize
  \begin{tabular}{ l  l }
  \hline
        $n_s$ & Number of through lanes\\
        $n_l$ & Number of left-turn lanes\\
        $\mathcal{S}_i$ & Set of segments of Category $i$, $i=1,2,3,4$\\
        $\mathcal{S}_5^l$ & Set of segments of Category $5$ on lane number $l$, $l\in\{n_{s+1},…,n_s+n_l\}$ \\       
        $T_i$ & Vehicle travel time on segment in  $\mathcal{S}_i$, $i=1,2,3,4$\\
        $T_5^l$ & Vehicle travel time on segment in $\mathcal{S}_5^l$, $l\in\{n_{s+1},…,n_s+n_l\}$ \\
        $T$ & The minimum time gap of two vehicles consecutively passing through the conflicting point\\
        $L$ & Vehicle length\\
        $w$ & Vehicle width\\
        $\delta$ & The minimum allowable distance between any two vehicles for ensuring safety\\
        $v_m$ & Maximum speed of vehicles\\
        $a_m$ & Maximum absolute value of acceleration and deceleration rates of vehicles\\
        $\theta$ & Rate of vehicle arrivals\\
        $\hat{D}$ & Average delay for vehicles under RC\\
        $\theta_i$ & Demand rate per lane for approach $i$, $i=1,2,…,8$\\
        $\alpha$ & Scaling factor that represents the traffic demand level\\
        $s$ & Length of the adjustment zone\\
        $v_q$ & Specific vehicle speed used in the trajectory construction procedure\\
\hline
  \end{tabular} \\
\end{table}

\newpage
\setcounter{figure}{0}
\renewcommand\thefigure{B-\arabic{figure}}
\setcounter{equation}{0}
\renewcommand{\theequation}{B-\arabic{equation}}
\setcounter{table}{0}

\section*{Appendix B} \label{appendix_proof}
\noindent
This appendix is for the proof of Propositions.\par

\subsection*{Proof of Proposition \ref{Prop_collisionfree}}
It is necessary to prove that for any conflicting point at an intersection, the headway between any two consecutive vehicles reaching this point is no less than $T_1$ because $T_1=T$, as shown by Condition (1). To achieve this, the conflicting points are classified into four types, as shown in Fig. \ref{categoryquan_fig}; then, each type is investigated. \par

\begin{figure}[!ht]
	\centering
	\includegraphics[width=0.7\textwidth]{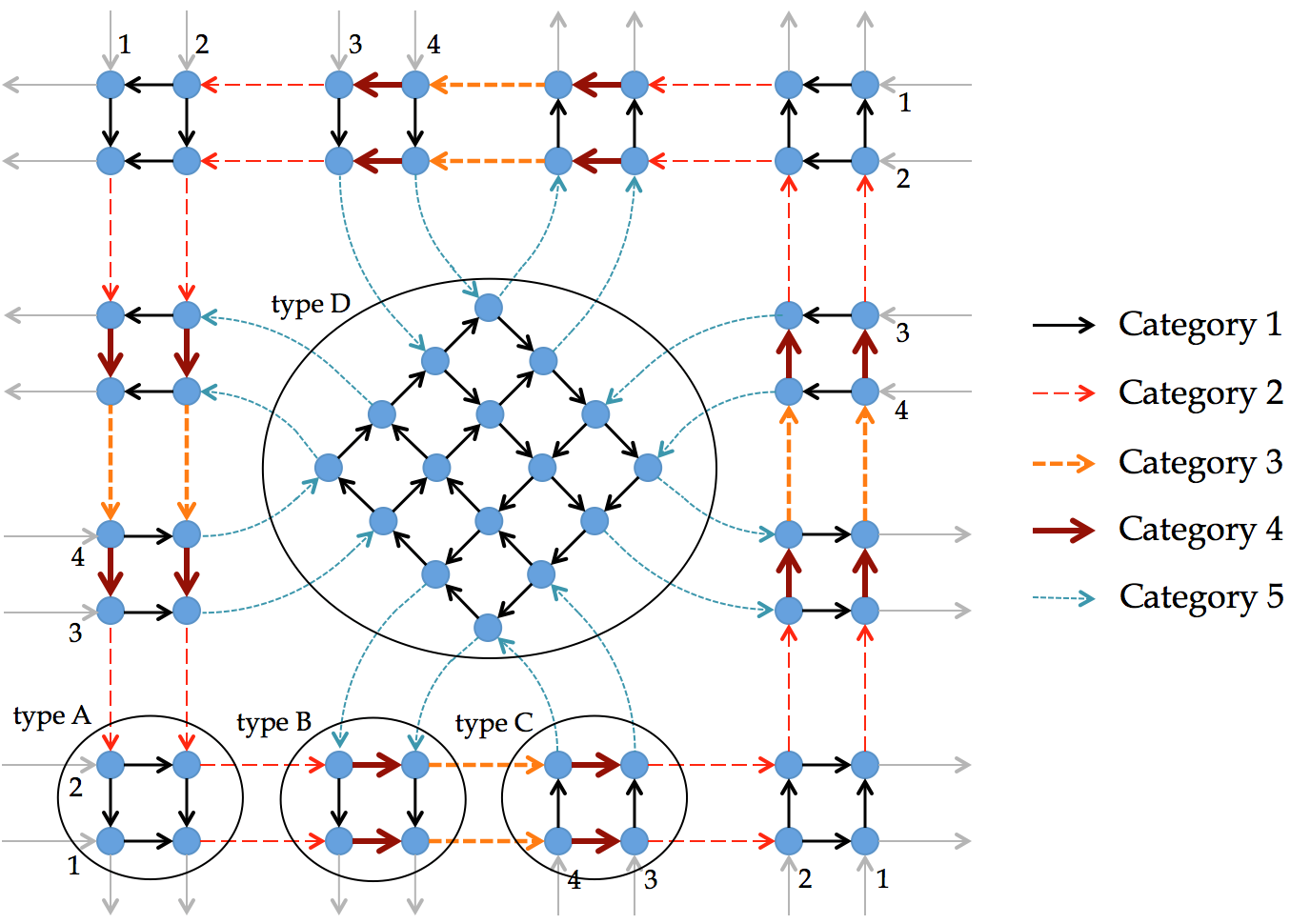}
	\caption[]{Categories of conflict points}
	\label{categoryquan_fig}
\end{figure}
\par

For simplicity, we re-express the intersection-entry time points of vehicles under the RC scheme, that is, vehicles on through lane $l\in\{{1,2,…,n_s}\}$ enter the intersection at time $t=(2k+l)T_1,k\in \mathbb{Z}$, and vehicles on left-turn lane $\hat{l}\in\{{n_s + 1,...,n_s + n_l}\}$ enter the intersection at time $t = (2k + n_s - 1)T_1+(2n_l + n_s- \hat{l} - 1) T_4+T_2+T_3,  k\in\mathbb{Z} $, which is equivalent to the previous expression of Section \ref{subsec_RCdescrib}.

\textbf{Type A} conflicting points are those with two through lanes, $l_1\in\{1,2,…,n_s\}$ and $l_2\in\{1,2,…,n_s\}$, from the conflicting legs. Only the point located at the back end of lane $l_1$ and front end of lane $l_2$ from the conflicting legs is investigated, as shown in Fig. \ref{illustrationl1l2A_fig}. For the other nodes of Type A, the following analysis can be similarly conducted. \par

\begin{figure}[!ht]
	\centering
	\includegraphics[width=0.4\textwidth]{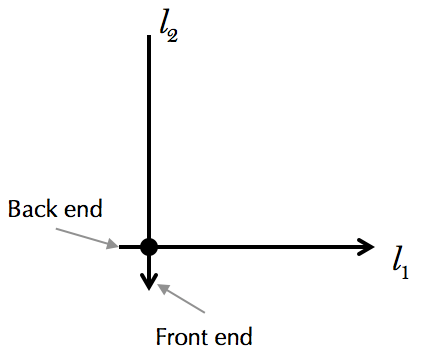}
	\caption[]{Illustration of $l_1$ and $l_2$}
	\label{illustrationl1l2A_fig}
\end{figure}
\par

The arrival times of vehicles on lanes $l_1$ and $l_2$ at the conflicting point are calculated as follows:
\begin{align*}
T_{l_{1}}&=(2k_{1}+l_{1})T_{1}+(l_{2}-1)T_{1}=(2k_{1}+l_{1}+l_{2}-1)T_{1} & k_1\in \mathbb{N}\\
T_{l_2}&=(2k_2+l_2)T_1+2T_2+T_3+2(n_l-1)T_4+(2n_s-1-l_1)T_1 & \\
&=(2k_2+2n_s+2k_0+l_2-l_1)T_1 & k_0,k_2\in \mathbb{N}
\end{align*}

The second equation above is attributed to Conditions (2)–(3). Then, it is further derived that $T_{l_1}-T_{l_2}=[2(k_1-k_2-n_s-k_0+l_1)-1]T_1, k_0,k_1,k_2\in \mathbb{N}.$ It is observed that $2(k_1 - k_2 - n_s - k_0 + l_1 ) - 1$ is an odd number; thus, we derive that $\abs{T_{l_1}-T_{l_2}}\geq T_1$; this means that the headway between any two consecutive vehicles reaching Type A conflicting points is no less than $T_1$.\par

\textbf{Type B} conflicting points are those with one through lane, $l_1\in \{1,2,…,n_s\}$, and one left-turn lane, $l_2\in \{n_s+1,…,n_s+n_l\}$, from the conflicting legs. The point located at the front end of lane $l_2$, as shown in Fig. \ref{illustrationl1l2B_fig}, is taken as an example for the investigation.

\begin{figure}[!ht]
	\centering
	\includegraphics[width=0.3\textwidth]{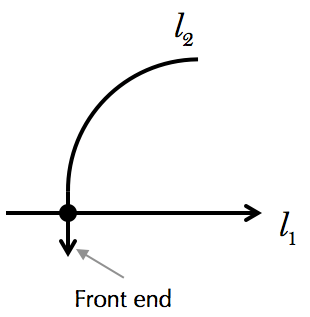}
	\caption[]{Illustration of $l_1$ and $l_2$}
	\label{illustrationl1l2B_fig}
\end{figure}
\par

The arrival times of vehicles on lanes $l_1$ and $l_2$ at the conflicting point are calculated as follows: 
\begin{align*}
T_{l_{1}}=&(2k_1+l_1+n_s-1)T_1+T_2+(l_2-n_s-1)T_4 & k_1\in \mathbb{N}\\
T_{l_2}=&(2k_2+3n_s+2n_l-l_1-3)T_1+(2n_l+n_s-l_2-1) T_4 & \\
&+T_2+T_3+2T_5^{l_2}  & k_2\in \mathbb{N}
\end{align*}

Moreover, we have: 
\begin{align*}
T_{l_{1}}-T_{l_2}&=2(k_1-k_2+l_1-n_s-n_l+1)T_1+2(l_2-n_s-n_l)T_4+T_3+2T_5^{l_2}\\
&=2(k_1-k_2+l_1-n_s-n_l+1)T_1+2(l_2-n_s-n_l)T_4+(2k_0+1)T_1
\end{align*}

The equation above holds because of Condition (4). By Condition (2), it is known that $2(l_2-n_s-n_l)T_4=2k^{'} T_1,k^{'}\in \mathbb{N}$; thus, $T_{l_1}-T_{l_2}=[2(k_1-k_2+k^{'}+k_0+l_1-n_s-n_l)+3]T_1$. Because $2(k_1-k_2+k^{'}+k_0+l_1-n_s-n_l)+3$ is odd, $\abs{T_{l_1}-T_{l_2}}\geq T_1$ is obtained.\par

\textbf{Type C} conflicting points are those with one through lane, $l_1\in \{1,2,…,n_s\}$, and one left-turn lane, $l_2\in \{n_s+1,…,n_s+n_l\}$, from the conflicting legs. The point located at the back end of lane $l_2$, as shown in Fig. \ref{illustrationl1l2C_fig}, is taken as an example for the investigation.

\begin{figure}[!ht]
	\centering
	\includegraphics[width=0.33\textwidth]{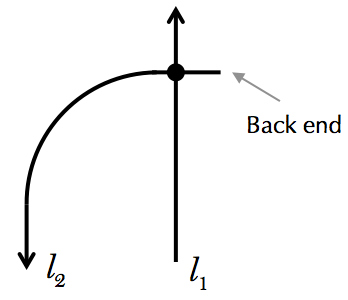}
	\caption[]{Illustration of $l_1$ and $l_2$}
	\label{illustrationl1l2C_fig}
\end{figure}
\par

The arrival times of vehicles on lanes $l_1$ and $l_2$ at the conflicting point are calculated as follows: 
\begin{align*}
T_{l_1}&=(2k_1+l_1+n_s-1)T_1+(2n_l+n_s-l_2-1)T_4+T_2+T_3 & k_1\in \mathbb{N}\\
T_{l_2}&=(2k_2+l_1+n_s-2)T_1+(2n_l+n_s-l_2-1)T_4+T_2+T_3 & k_2\in \mathbb{N}
\end{align*}
$T_{l_1}-T_{l_2}=(2k_1-2k_2+1)T_1 ,k_1,k_2\in \mathbb{N}$. Because $2k_1-2k_2+1$ is odd, $\abs{T_{l_1}-T_{l_2}}\geq T_1$ is obtained.

\textbf{Type D} conflicting points are those with two left through lanes, $l_1\in \{n_s+1,…,n_s+n_l\}$ and $l_2\in \{n_s+1,…,n_s+n_l\}$, from the conflicting legs. The point located at the back end of lane $l_1$ and front end of lane $l_2$, as shown in Fig. \ref{illustrationl1l2D_fig}, is taken as an example for the investigation.

\begin{figure}[!ht]
	\centering
	\includegraphics[width=0.35\textwidth]{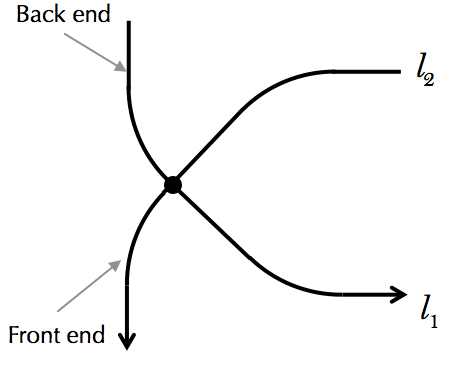}
	\caption[]{Illustration of $l_1$ and $l_2$}
	\label{illustrationl1l2D_fig}
\end{figure}
\par

\noindent The arrival times of vehicles on lanes $l_1$ and $l_2$ at the conflicting point are calculated as follows:
\begin{align*}
T_{l_1}&=(2k_1+3n_s+n_l-l_2-2)T_1+(2n_l+n_s-l_1-1)T_4+T_2+T_3+T_5^{l_1}& k_1\in \mathbb{N}\\
T_{l_2}&=(2k_2+n_s+n_l+l_1-3)T_1+(2n_l+n_s-l_2-1)T_4+T_2+T_3+T_5^{l_2} & k_2\in \mathbb{N}
\end{align*}

And, $T_{l_1}-T_{l_2}=[2(k_1-k_2+n_s)-l_1-l_2+1]T_1-(l_1+l_2)T_4+(T_5^{l_1 }-T_5^{l_2}),k_1,k_2\in \mathbb{N}$. By Condition (2) it is known that $(l_1+l_2) T_4=(l_1+l_2)(2k^{'}+1) T_1,k^{'}\in \mathbb{N}$, and by Condition (5), we have $T_5^{l_1}-T_5^{l_2}=2k^{''}T_1,k^{''}\in \mathbb{N}$. Thus, $T_{l_1}-T_{l_2}=[2(k_1-k_2+n_s)-2(l_1+l_2 )(k^{'}+1)+2k^{''}+1]T_1,k_1,k_2,k^{'},k^{''}\in \mathbb{N}$. Because $2(k_1-k_2+n_s)-2(l_1+l_2 )(k^{'}+1)+2k^{''}+1$ is odd, $\abs{T_{l_1}-T_{l_2}}\geq T_1$ is obtained.

With the above reasoning, the proof of Proposition \ref{Prop_collisionfree} is complete.\hfill $\square$

\subsection*{Proof of Proposition \ref{delay_bound_prop}}

We rewrite the state transition function as the following form:
\begin{align}
& w_{k+1} = w_k + \lambda_{k} - \mu_{k} & \forall k = 0,1,2,\dots
\end{align}

\noindent where $\lambda_k$ and $\mu_k$ are the number of vehicles entering the adjustment zone and allowed to enter the intersection, respectively; $\mu_k = \min (w_k, 1)$. Then we have:
\begin{align}
& w_{k+1}^2 - w_k^2 = (\lambda_k - \mu_k)^2 + 2(\lambda_k - \mu_k)w_k
\end{align}

And further:
\begin{align} \label{transition_proof_eq}
& \mathbb{E}(w_{k+1}^2 - w_k^2 | w_k) = \mathbb{E}(\lambda_k - \mu_k)^2 + 2 w_k \mathbb{E}(\lambda_k - \mu_k)
\end{align}

By assumption we know $0 \le \lambda_k \le 2$. When $\mu_k = 0$, $\mathbb{E}(\lambda_k - \mu_k)^2 = \mathbb{E}(\lambda_k)^2 = P_1^{\theta} + 4 P_2^{\theta} \le 2 P_1^{\theta} + 4 P_2^{\theta} = 2 \mathbb{E}(\lambda_k) = 4 \theta T_1 $. When $\mu_k = 1$, $ \mathbb{E}(\lambda_k - \mu_k)^2 = \mathbb{E}(\lambda_k - 1)^2 = P_0^{\theta} + P_2^{\theta} \le 1 $. \par

On the other hand, when $w_k = 0$, $2 w_k \mathbb{E}(\lambda_k - \mu_k) = 0 \le 2 (2 \theta T_1 - 1) w_k$; when $w_k \ge 1$, we have $2 w_k \mathbb{E}(\lambda_k - \mu_k) = 2 w_k \mathbb{E}(\lambda_k) - 2 w_k = 2 (2 \theta T_1 - 1) w_k$. \par 

In all, we always have:
\begin{align}
& \mathbb{E}(w_{k+1}^2 - w_k^2 | w_k) \le 2 (2 \theta T_1 - 1) w_k + \left\{ \begin{matrix}
4 \theta T_1 & \mathrm{if} \ \mu_k = 0 \\ 1 & \mathrm{if} \ \mu_k = 1
\end{matrix} \right.
\end{align}

Meanwhile, $\mu_k = 0$ indicates $w_k = 0$, and in the main body we have derived that $p_0^{\theta} = 1 - 2 \theta T_1$. Therefore, for a sufficiently large number $M$, by taking expectation on $w_k$ and summing over $k = 0,1,2,\dots,M$, we yield:
\begin{align}
\mathbb{E}(w_{M+1}^2) - \mathbb{E}(w_{0}^2) &\le [4 \theta T_1 (1 - 2 \theta T_1) + 2 \theta T_1] M + 2 (2 \theta T_1 - 1) \sum_{k=1}^{M} \mathbb{E}(w_{k}) \nonumber \\
&= (6 \theta T_1 - 8 \theta^2 T_1^2) M + 2 (2 \theta T_1 - 1) \sum_{k=1}^{M} \mathbb{E}(w_{k})
\end{align}

Letting $w_{0} = 0$ and taking the limit leads to:
\begin{align}
& \lim_{M \rightarrow +\infty} \frac{1}{M} \sum_{k=1}^{M} \mathbb{E}(w_{k}) \le \frac{3 \theta T_1 - 4 \theta^2 T_1^2}{1 - 2 \theta T_1} = 2 \theta T_1 + \frac{\theta T_1}{1 - 2\theta T_1}
\end{align}

Note that $\mathbb{E}(w_k)$ is the expected queue length \textit{rightly before} the $k^{th}$ entry time point; and the average queue length should average it with the expected queue length \textit{rightly after} an entry time point. So the average queue length $\overline{Q}(\theta)$ is given by:
\begin{align}
\overline{Q}(\theta) &= \lim_{M \rightarrow +\infty} \frac{1}{2M} \left\{ \sum_{k=1}^{M} \left[ \mathbb{E}(w_{k}) + \mathbb{E}(\max(w_{k}-1,0)) \right] \right\} \nonumber \\
&= \lim_{M \rightarrow +\infty} \frac{1}{2M} \left\{ \sum_{k=1}^{M} \left[ \mathbb{E}(w_{k}) + 2 \theta T_1\mathbb{E}(w_{k}-1 | w_{k} \ge 1) \right] \right\} \nonumber \\
&= \lim_{M \rightarrow +\infty} \frac{1}{M} \sum_{k=1}^{M} \left[ \mathbb{E}(w_{k}) - \theta T_1 \right]
\end{align}

The second equality is because $\mathbb{P}(w_k = 0) = 1 - 2 \theta T_1$. Since $\lim_{M \rightarrow +\infty} \frac{1}{M} \sum_{k=1}^{M} \mathbb{E}(w_{k})$ represents the average queue length, by Little's law, the average vehicle delay is then bounded by:
\begin{align}
& \overline{D}(\theta) = \frac{1}{\theta} \overline{Q}(\theta) = \lim_{M \rightarrow +\infty} \frac{1}{M\theta} \sum_{k=1}^{M} \left[ \mathbb{E}(w_{k}) - \theta T_1 \right] \le T_1 + \frac{T_1}{1 - 2\theta T_1}
\end{align}

Proof completes. \hfill $\square$

\subsection*{Proof of Proposition \ref{admissible_prop}}

We first prove that when $\theta_i < \frac{1}{2T_1}$, the time average of queue length is bounded. For brevity, in the proof we omit the approach index $i$, and the notations used are the same as in the proof of Proposition \ref{delay_bound_prop}. With similar derivation, we obtain:
\begin{align}
& \mathbb{E}(w_{k+1}^2 - w_k^2 | w_k) = \mathbb{E}(\lambda_k - \mu_k)^2 + 2 w_k \mathbb{E}(\lambda_k - \mu_k)
\end{align}

\noindent and we have already known that $2 w_k \mathbb{E}(\lambda_k - \mu_k) \le 2(2 \theta T_1 - 1)w_k$. Meanwhile, we have:
\begin{align}
\mathbb{E}(\lambda_k - \mu_k)^2 &= \mathbb{E}(\lambda_k)^2 - 2 \mu_k \mathbb{E}(\lambda_k) + \mu_k^2 = \mathbb{E}(\lambda_k)^2 - 4 \mu_k \theta T_1 + \mu_k^2
\end{align}

Since $\mathbb{E}(\lambda_k) = \sum_{l=1}^{+\infty} l^2 P_l^{\theta} < +\infty$, we acquire that $\mathbb{E}(\lambda_k - \mu_k)^2$ is finite, i.e., there exists some constant $K$ such that $\mathbb{E}(\lambda_k - \mu_k)^2 < K$. Therefore, we yield:
\begin{align}
& \mathbb{E}(w_{k+1}^2 - w_k^2 | w_k) \le K + 2(2 \theta T_1 - 1)w_k
\end{align}

For a sufficiently large number $M$, by taking expectation on $w_k$ and summing over $k = 0,1,2,\dots,M$, we yield:
\begin{align}
\mathbb{E}(w_{M+1}^2) - \mathbb{E}(w_{0}^2) &\le MK + 2(2 \theta T_1 - 1) \sum_{k=1}^{M} \mathbb{E}(w_k)
\end{align}

Letting $w_{0} = 0$ and taking the limit, we finally have:
\begin{align}
& \lim_{M \rightarrow +\infty} \frac{1}{M} \sum_{k=1}^{M} \mathbb{E}(w_{k}) \le \frac{K}{2 - 4 \theta T_1}
\end{align}

Note that $\lim_{M \rightarrow +\infty} \frac{1}{M} \sum_{k=1}^{M} \mathbb{E}(w_{k})$ can be treated as an upper bound of the time average of queue length, so the above inequality proves that $\theta_i < \frac{1}{2T_1}$ leads to a stabilized queue.\par

On the other hand, when $\theta_i \rightarrow \frac{1}{2T_1}$, the average delay is approaching infinity if the arrival process follows Poisson's process (Eq.(\ref{delay_poisson_eq})). When $\theta_i > \frac{1}{2T_1}$, the arrival rate is larger than the maximum service rate, so the queue grows to infinity. Therefore, for a general arrival process satisfying $\sum_{l=1}^{+\infty} l^2 P_l^{\theta} < +\infty$, the admissible demand set is given by $\theta_i < \frac{1}{2T_1} = \frac{v_m}{2(L+w+\sqrt{2}\delta)}$. Proof completes.
 \hfill $\square$

\newpage
\setcounter{figure}{0}
\renewcommand\thefigure{C-\arabic{figure}}
\setcounter{equation}{0}
\renewcommand{\theequation}{C-\arabic{equation}}
\setcounter{table}{0}

\section*{Appendix C}\label{appendix_inequality}
\noindent This appendix derives the inequality $T_1\geq\frac{L+w+\sqrt 2\delta}{v_m}$ . In Appendix B, it is proved that the headway between any two consecutive vehicles reaching a point is no less than $T_1$. Any two intersecting lanes are perpendicular to each other. Accordingly, a rectangular coordinate can be constructed; the vehicles on the two lanes move along the x-axis and y-axis, as shown in Fig. \ref{twovehicle_fig}. The coordinates of the centers of the two vehicles at time $t$ can be expressed as $(v_mt,0)$ and $(0,v_m(t+T'))$, where $\abs{T'}\geq T_1$. Without loss of generality, set $T'\geq T_1$. In Fig. \ref{twovehicle_fig}, it can be observed that a potential collision can occur between the front left corner of Vehicle 1 and left rear corner of Vehicle 2. The coordinate of the front left point of Vehicle 1 is $\mathbf{x_1} (t)=(v_mt+\frac{1}{2}L,\frac{1}{2}w)$, and that of the left rear point of Vehicle 2 is $\mathbf{x_2}(t)=(-\frac{1}{2}w,v_m(t+T')-\frac{1}{2}L)$. The distance between those two points is
\begin{align*}
\|\mathbf{x_1}(t)-\mathbf{x_2} (t)\|&=\sqrt{(v_mt+\frac{1}{2}L+\frac{1}{2}w)^2+(v_m (t+T')-\frac{1}{2}L-\frac{1}{2}w)^2}\\
&=\sqrt{2v_m^2t^2+2v_m^2tT'+v_m^2T'^2+\frac{1}{2}(L+w)^2-v_mT'(L+w)}
\end{align*}

Let $f(t)=2v_m^2t^2+2v_m^2tT'+v_m^2T'^2+\frac{1}{2}(L+w)^2-v_mT'(L+w)$; by the property of quadratic functions, $f(t)\geq f(-\frac{T'}{2})=\frac{T'^2}{2}v_m^2-v_mT'(L+w)+\frac{1}{2}(L+w)^2=\frac{1}{2}(v_mT'-L-w)^2$. The last expression is required to be no smaller than $\delta^2$ because of the safety constraint, $\|\mathbf{x_1}(t)-\mathbf{x_2} (t)\|\geq \delta$. Accordingly, $v_mT_1\geq L+w+\sqrt{2}\delta$, and the required inequality is derived.

\begin{figure}[!ht]
	\centering
	\includegraphics[width=0.5\textwidth]{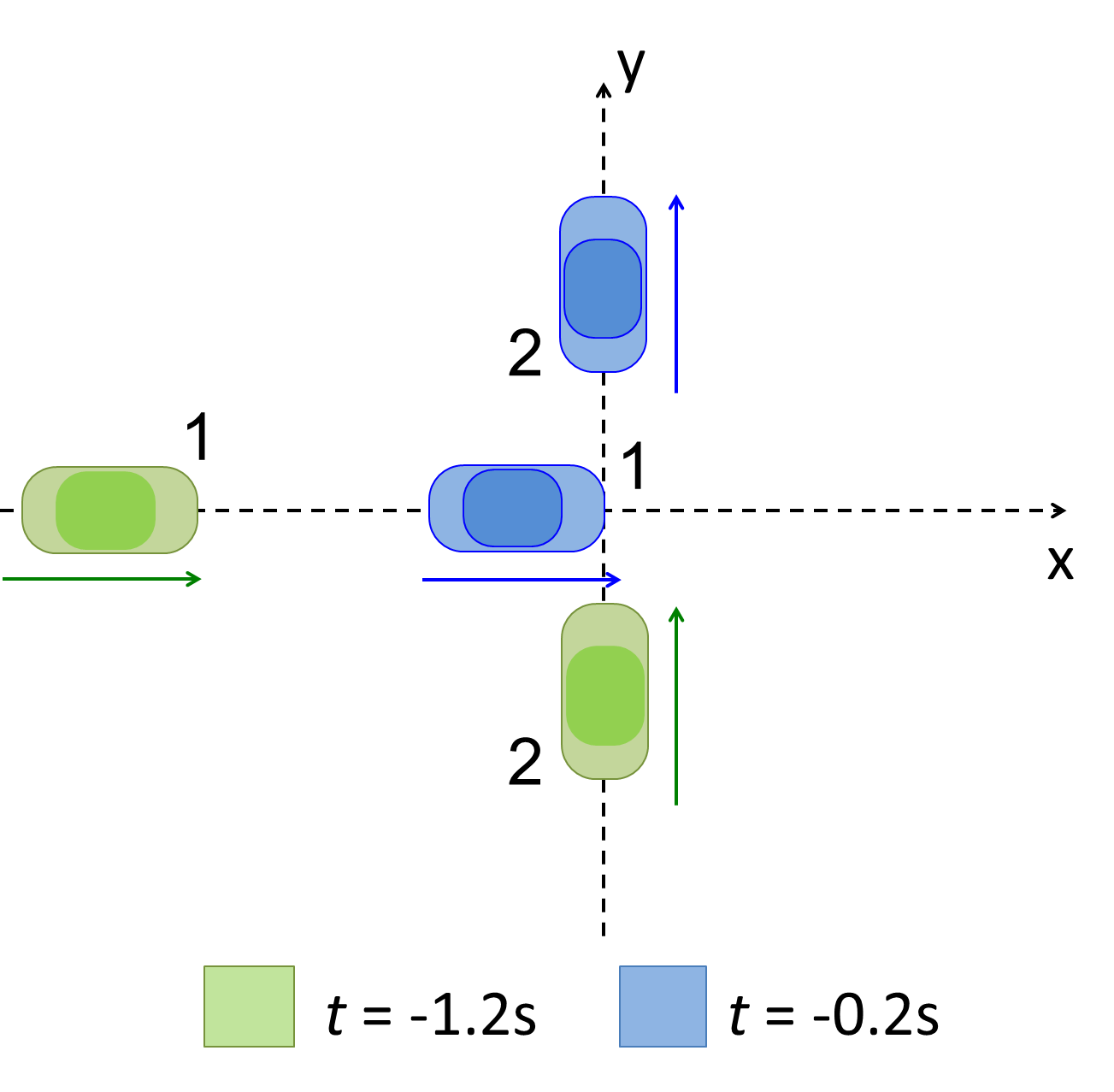}
	\caption[]{Illustration of two vehicles passing through a conflicting point}
	\label{twovehicle_fig}
\end{figure}
\par

\newpage
\setcounter{figure}{0}
\renewcommand\thefigure{D-\arabic{figure}}
\setcounter{equation}{0}
\renewcommand{\theequation}{D-\arabic{equation}}
\setcounter{table}{0}
\renewcommand{\thetable}{D-\arabic{table}}
\setcounter{prop}{0}
\renewcommand{\theprop}{D-\arabic{prop}}

\section*{Appendix D} \label{appendix_speedcurve}
\noindent In this appendix, a set of parsimonious rules for trajectory adjustment is provided; it is free from online computation and optimization.\par

Assume that the length of the adjustment zone is $s$; $x=0$ represents the location of the zone’s entry point; and $x=s$ represents its exit point. Each vehicle enters the zone with the speed $v_m$, and it is necessary to guarantee that it arrives at $x=s$ with the speed $v_m$ at the time points proposed by the RC scheme, as stated in Section \ref{subsec_CollisionAvoidance}. In the following analyses, without loss of generality, we fix the value of $v_m$ to be $\frac{L+w+\sqrt{2}\delta}{T_1}$.\par

A specific speed, $v_q$, that is used in the trajectory construction procedure is first introduced. Its value can be calculated using $v_q=\frac{L+\delta}{2T_1}$; $v_q$ is chosen such that a group of vehicles travelling at this speed is capable of crossing the adjustment zone with a minimum allowable distance, $\delta$, and the time headway between two consecutive vehicles is exactly $2T_1$, which is the minimum time headway allowed in the RC scheme for the vehicles on the same lane. Note that $v_q=\frac{L+\delta}{2T_1}$ is not the only choice; however, our observations suggest that it is an appropriate value.\par

Once a vehicle $i$ arrives at $x=0$ at time $t_i^0$, a speed curve is immediately assigned to it such that the target arrival time at $x=s$ is $t_i^s$. There are two possible types of speed curves that can be assigned; one type is shown in Fig. \ref{speedcurve_fig}. The vehicle decelerates at a rate $a_m$, over time $t_s\geq t_i^0$, and to a speed $v_q$ (at most); then, it accelerates at a rate $a_m$ and ending at time $t_e\leq t_i^s$ with a speed $v_m$. The other type of curve represents the case where a vehicle decelerates to speed $v_d>v_q$ and then immediately accelerates. 

\begin{figure}[!ht]
	\centering
	\includegraphics[width=0.6\textwidth]{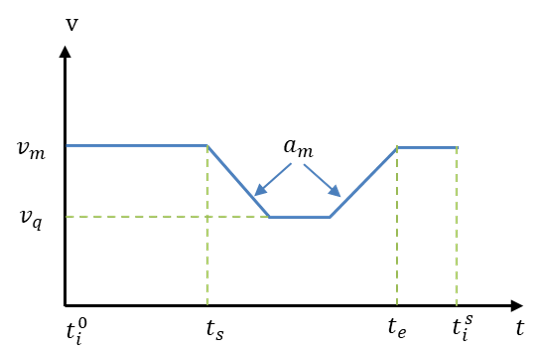}
	\caption[]{Speed curve shape}
	\label{speedcurve_fig}
\end{figure}
\par

The speed curve is denoted by $V_i(t;t_s,t_e)$ because with $t_s$ and $t_e$, the entire curve can be confirmed. A legal speed curve must satisfy the following constraints:
\begin{align}
&\int_{t_i^0}^{t_i^s}V_i(\omega;t_s,t_e)d\omega=s\label{eq_speedcurve1}\\
&\int_{t_{i-1}^0}^{T}V_{i-1}(\omega;t_s,t_e)d\omega-\int_{t_{i}^0}^{T}V_{i}(\omega;t_s,t_e)d\omega\geq L+\delta, &T\in [t_i^0,t_{i-1}^s]\label{eq_speedcurve2}
\end{align}

where Constraint (\ref{eq_speedcurve1}) requires that the distance traveled over time interval $[t_i^0,t_i^s]$ is $s$; Constraint (\ref{eq_speedcurve2}) requires that the distance between vehicles $i$ and $i-1$ (the vehicle before $i$) is always not less than $\delta$. Based on speed curve shapes, the integral of $V_i(t;t_s,t_e )$ can be calculated by $\int_{t_i^0}^{t_i^s}V_i(\omega;t_s,t_e)d\omega=\left\{
\begin{aligned}
&v_m(t_i^s-t_i^0-t_e+t_s)+v_q[t_e-t_s-\frac{2(v_m-v_q)}{a_m} ]+\frac{v_m^2-v_q^2}{a_m}, & t_e-t_s\geq \frac{2(v_m-v_q)}{a_m}\\
&v_m(t_i^s-t_i^0)-\frac{a_m(t_e-t_s)^2}{4}, & t_e-t_s<\frac{2(v_m-v_q)}{a_m}
\end{aligned}
\right.$. By introducing this equation into Constraint (\ref{eq_speedcurve1}), it can be found that once $t_i^0$ and $t_i^s$ are given, $\Delta t=t_e-t_s$ can be easily calculated using the linear equation, Eq. (\ref{eq_speedcurve3}), or the quadratic equation, Eq. (\ref{eq_speedcurve4}), as follows: 
\begin{align}
&v_m(t_i^s-t_i^0-\Delta t)+v_q[\Delta t-\frac{2(v_m-v_q)}{a_m}]=s-\frac{v_m^2-v_q^2}{a_m}\label{eq_speedcurve3}  \\
&\frac{a_m\Delta t^2}{4}=v_m(t_i^s-t_i^0)-s \label{eq_speedcurve4}
\end{align}

The following elaborates how Constraint (\ref{eq_speedcurve2}) determines $t_e$ in Proposition \ref{prop_speedcurve}. Note that $t_e^{(i)}$ and $\Delta t^{(i)}$ are used to denote $t_e$ and $\Delta t$ for Vehicle $i$, respectively. 

\begin{prop}\label{prop_speedcurve}
If $\Delta t^{(i)}\geq \Delta t^{(i-1)}$, then by setting $t_e^{(i)}=t_e^{(i-1)}$, it can be guaranteed that the distance between Vehicles $i$ and $i-1$ is always larger or equal to $\delta$. If $\Delta t^{(i)}< \Delta t^{(i-1)}$, then by setting $t_e^{(i)}=t_e^{(i-1)}+\frac{L+\sqrt{2}\delta+2w}{v_m-v_q}$, it can be guaranteed that the distance between Vehicles $i$ and $i-1$ is always larger than or equal to $\delta$.
\end{prop}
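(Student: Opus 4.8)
The plan is to recast the claim as a statement about the front-to-front separation of the two vehicles and to control its evolution through a pointwise comparison of their speed curves. Writing $P_j(T)=\int_{t_j^0}^{T}V_j(\omega)\,d\omega$ for the position of vehicle $j$ and $G(T)=P_{i-1}(T)-P_i(T)$ for their separation, Constraint~(\ref{eq_speedcurve2}) is exactly $G(T)\ge L+\delta$ on $T\in[t_i^0,t_{i-1}^s]$, which is equivalent to a bumper-to-bumper gap of at least $\delta$. The key observation is that $G'(T)=V_{i-1}(T)-V_i(T)$, so the separation grows precisely when vehicle $i$ is slower than its leader and shrinks otherwise. I would also record the boundary margin: near the exit both vehicles cruise at $v_m$ and their target slots are $2T_1$ apart, so the separation there equals $2T_1v_m=2(L+w+\sqrt{2}\delta)$, using the normalization $v_mT_1=L+w+\sqrt{2}\delta$.

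The technical backbone is a shape-comparison lemma for the admissible curves. Every legal $V_j$ is a symmetric dip from $v_m$ down to a trough (at least $v_q$) and back, with both flanks of slope magnitude $a_m$, hence it is completely determined by the pair $(t_e^{(j)},\Delta t^{(j)})$. I would prove, by writing $V_j$ explicitly as a function of the backward time $u=t_e^{(j)}-t$, that for two curves sharing a common $t_e$ the one with the larger $\Delta t$ is pointwise no greater than the other; the comparison is immediate on the shared acceleration flank and on the cruise portion, and a short piecewise check covers the deceleration flanks and the trapezoid-versus-triangle mixtures. With this lemma Case~1 ($\Delta t^{(i)}\ge\Delta t^{(i-1)}$, $t_e^{(i)}=t_e^{(i-1)}$) is immediate: the lemma gives $V_i(T)\le V_{i-1}(T)$ for every $T$, so $G'(T)\ge 0$ and the separation is nondecreasing; its minimum over the overlap interval is therefore attained at the admission instant $t_i^0$, and since vehicles are admitted to the adjustment zone respecting the minimum safe spacing we have $G(t_i^0)\ge L+\delta$, which then propagates to all later $T$.

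Case~2 ($\Delta t^{(i)}<\Delta t^{(i-1)}$) is where the real work lies, since aligning $t_e$ would now make vehicle $i$ the faster one and let it close the gap. The rule instead retards its curve by $\Delta=\tfrac{L+\sqrt{2}\delta+2w}{v_m-v_q}$, and I would first check feasibility: substituting $v_q=\tfrac{L+\delta}{2T_1}$ and $v_m=\tfrac{L+w+\sqrt{2}\delta}{T_1}$ gives $v_m-v_q=\tfrac{L+2w+(2\sqrt{2}-1)\delta}{2T_1}$, whence $\Delta\le 2T_1=t_i^s-t_{i-1}^s$ (because $L+\sqrt2\delta+2w\le L+2w+(2\sqrt2-1)\delta$), so $t_e^{(i)}=t_e^{(i-1)}+\Delta\le t_i^s$ and the shifted curve is legal. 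The displacement makes vehicle $i$ faster than its leader only on a single bounded window preceding its delayed dip and slower afterward, so $G$ has a unique interior minimum at the speed-crossover and it suffices to bound the total catch-up there. Using the identity $G(T)=\int_{t_{i-1}^s}^{t_i^s}V_i+\int_{T}^{t_{i-1}^s}(V_i-V_{i-1})$ and starting from the exit margin $2(L+w+\sqrt{2}\delta)$, the value of $\Delta$ is designed so the accumulated catch-up does not exceed $(v_m-v_q)\Delta=L+\sqrt{2}\delta+2w$, leaving $G\ge 2(L+w+\sqrt{2}\delta)-(L+\sqrt{2}\delta+2w)=L+\sqrt{2}\delta\ge L+\delta$, i.e.\ a gap of at least $\sqrt{2}\delta\ge\delta$.

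I expect the catch-up estimate in Case~2 to be the main obstacle. The headline computation above is clean, but a rigorous bound requires a case split according to whether each curve is trapezoidal or triangular, and — when $t_e^{(i)}>t_{i-1}^s$ — a correction to the exit-margin identity, since vehicle $i$ is then still accelerating over part of $[t_{i-1}^s,t_i^s]$ and $\int_{t_{i-1}^s}^{t_i^s}V_i$ falls short of $2(L+w+\sqrt{2}\delta)$. Verifying that the retardation $\Delta$ dominates the worst of these sub-cases is the crux; by contrast, the reformulation, the shape lemma, and Case~1 are comparatively routine.
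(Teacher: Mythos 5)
Your proposal follows the paper's own proof in essentially the same way: Case 1 rests on the observation that with $t_e^{(i)}=t_e^{(i-1)}$ and $\Delta t^{(i)}\geq \Delta t^{(i-1)}$ the inter-vehicle gap is non-decreasing so its minimum is the (safe) entry gap, and Case 2 subtracts the post-minimum catch-up bound $(v_m-v_q)\cdot\frac{L+\sqrt{2}\delta+2w}{v_m-v_q}=L+\sqrt{2}\delta+2w$ from the exit margin $2T_1v_m=2(L+w+\sqrt{2}\delta)$ to obtain a minimum separation of $L+\sqrt{2}\delta$. The additional details you flag (the pointwise shape-comparison lemma, the feasibility of the shift, the trapezoid/triangle sub-cases) merely make explicit what the paper disposes of with ``it is evident'' and its two figure scenarios.
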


\begin{proof}
\textit{\underline{Case I $\Delta t^{(i)}\geq\Delta t^{(i-1)}$.}} Let $t_e^{(i)}=t_e^{(i-1) }$. It is evident that the distance between Vehicles $i$ and $i-1$ is non-decreasing within the time interval $[t_i^0,t_{i-1}^s]$. Because the distance between these two vehicles is not less than $\delta$ when they enter the adjusting segment, it is known that within interval $[t_i^0,t_{i-1}^s]$, the distance is always larger than or equal to $\delta$.

\textit{\underline{Case II $\Delta t^{(i)}<\Delta t^{(i-1)}$.}} Because of the RC, the distance between these two vehicles when leaving the adjusting segment should be no less than $2T_1v_m=2(L+w+\sqrt{2}\delta)$. The speed curves of these two vehicles shown in Figs. \ref{scenario_fig}(a)–(b) represent two possible scenarios. It is easy to verify that at time $t^*$, the distance between the two vehicles reaches the minimum in both scenarios, and the area of shaded zone, $S$, is the increased distance after $t^*$. By $t_e^{(i)}=t_e^{(i-1)}+\frac{L+\sqrt{2} \delta+2w}{v_m-v_q}$, the area of $S$ is no more than $(v_m-v_q)\frac{L+\sqrt{2} \delta+2w}{v_m-v_q}=L+\sqrt{2} \delta+2w$. Thus, the distance at time $t^*$ is no less than $2(L+\sqrt{2} \delta+w)-(L+\sqrt{2} \delta+2w)=L+\sqrt{2} \delta$. The proof is complete.
\end{proof}

\begin{figure}[!ht]
	\centering
	\subfloat[][Scenario 1]{\includegraphics[width=0.4\textwidth]{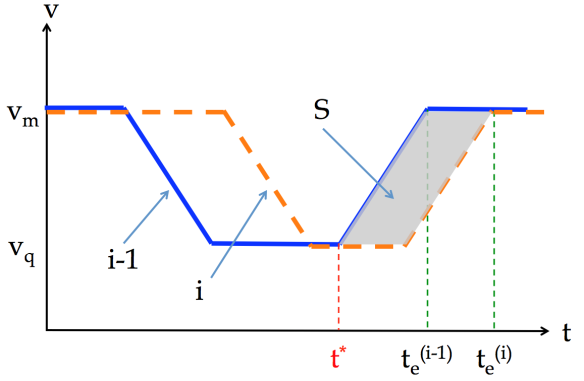}}
	\subfloat[][Scenario 2]{\includegraphics[width=0.4\textwidth]{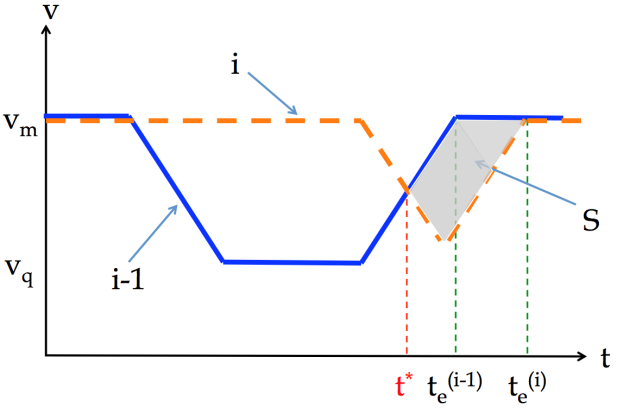}}
	\caption[]{Graphical illustration of two possible scenarios} 
	\label{scenario_fig}
\end{figure}
\par

Using Eqs. (\ref{eq_speedcurve3})–(\ref{eq_speedcurve4}) and Proposition \ref{prop_speedcurve}, the steps for generating the speed curve for Vehicle $i$ when it enters the adjustment zone are enumerated as follows.

\noindent \textbf{\underline{Speed Curve Generation}} \par
\hangafter 1
\hangindent 3.5em
\noindent \underline{\textbf{Step 0}}. Determine the earliest time point when Vehicle $i$ can arrive at $x=s$ by the RC scheme, as explained in Section \ref{subsec_CollisionAvoidance}; \par
\hangafter 1
\hangindent 3.5em
\noindent \underline{\textbf{Step 1}}. Determine the value of $\Delta t^{(i)}$ by Eqs. (\ref{eq_speedcurve3})–(\ref{eq_speedcurve4}); \par
\hangafter 1
\hangindent 3.5em
\noindent \underline{\textbf{Step 2}}. First, set $t_e^{(i)}=t_i^s$.  If Constraint (\ref{eq_speedcurve2}) holds, then fix $t_e^{(i)}$ and calculate $t_s^{(i)}$ accordingly; otherwise, determine $t_e^{(i)}$ and $t_s^{(i)}$ using Proposition \ref{prop_speedcurve}. \par

It can be observed that all the steps in the speed curve generation procedure only require elementary arithmetic operations; the foregoing is consistent with RC’s property of low computational loading.

\end{document}